\DeclareMathOperator{\sgn}{sgn}
\DeclareMathOperator{\Diag}{Diag}
\DeclareMathOperator{\erf}{erf}
\definecolor{dkgreen}{rgb}{0,0.6,0}
\definecolor{gray}{rgb}{0.5,0.5,0.5}
\date{}
\newtheorem{theorem}{Theorem}[section]
\newtheorem{lemma}[theorem]{Lemma}
\newtheorem{proposition}[theorem]{Proposition}
\newenvironment{proof}[1][Proof]{\begin{trivlist}\item[\hskip \labelsep {\bfseries #1.}]}{$\Box$\end{trivlist}}
\newcommand{\diag}{\operatorname{diag}}
\newcommand{\real}{\mathbb R}
\newcommand{\dd}{\mathrm{d}}
\numberwithin{equation}{section}
\title{Conjugate gradient based acceleration for inverse problems}
\author[1,3]{Sergey Voronin}
\author[2]{Christophe Zaroli}
\author[1]{Naresh P. Cuntoor}
\affil[1]{Intelligent Automation Inc, Rockville, MD, USA}
\affil[2]{Institut de Physique du Globe de Strasbourg, UMR 7516, Universit\'e de Strasbourg, EOST/CNRS, France}
\affil[3]{Tufts University Department of Mathematics, Medford, MA, USA}
\date{\today}
\begin{document}

\maketitle

\begin{abstract}
The conjugate gradient method is a widely used algorithm for the numerical solution of a system of linear equations. 
It is particularly attractive because it allows one to take 
advantage of sparse matrices and produces (in case of infinite precision arithmetic) 
the exact solution after a finite number of iterations. It is thus well suited 
for many types of inverse problems. On the other hand, the method requires the computation of the gradient. Here difficulty 
can arise, since the functional of interest to the given inverse problem 
may not be differentiable. In this paper, we review two approaches 
to deal with this situation: iteratively reweighted least squares and convolution smoothing. We apply 
the methods to a more generalized, two parameter penalty functional. 
We show advantages of the proposed algorithms using examples from  
a geotomographical application and for synthetically constructed multi-scale reconstruction and regularization parameter estimation.   
\end{abstract}

\section{Introduction}

Consider the linear system $Ax=b$, where
$A\in\mathbb{R}^{m\times n}$ and $b\in\mathbb{R}^m$. Often, in linear 
systems arising from physical inverse problems, we have 
more unknowns than data: $m \ll n$ \cite{tarantola82} and the right hand side of the 
system corresponding to the observations or measurements is noisy. In such a setting, 
it is common to use regularization by introducing a constraint on the solution, 
both to account for the 
possible ill-conditioning of $A$ and noise in $b$ and for the 
lack of data with respect to the number of unknown variables in the linear system.
A commonly used constraint is imposed via a penalty on the norm of the solution, 
either forcing the sum of certain powers of the absolute values of coefficients 
to be bounded or 
for many of the coefficients to be zero, i.e. sparsity: to require the 
solution $x$ to have few nonzero elements compared to the dimension of $x$.
To account for different kinds of scenarios, we consider here the generalized functional:
\begin{equation}
\label{eq:lp_funct}
   F_{l,p}(x) = \|Ax - b\|_l^l
    + \lambda \|x\|_p^p = 
    \displaystyle\sum_{i=1}^m \left| \displaystyle\sum_{j=1}^n A_{ij} x_j - b_i \right|^l
    + \lambda \sum_{k=1}^n |x_k|^p,
\end{equation}
for $1 \leq l,p \leq 2$. For example, when $l=p=2$, the familiar Tikhonov regularization is recovered. 
When $l=2$ and $p=1$, we obtain the least squares problem 
with the convex $\ell_1$ regularizer, governed by the regularization parameter $\lambda>0$ 
\cite{ingrid_thresholding1}, commonly used for sparse signal recovery. 
In addition, \eqref{eq:lp_funct} allows us to impose a non-standard penalty on the 
residual vector, $r = Ax - b$. This is useful e.g. in cases, where we want to impose a higher 
penalty on any outliers which may be present in $b$. 
(Since the $l=2$ case is commonly utilized, we denote $\tilde{F}_p(x) := F_{2,p}(x)$). 
For any $p\geq1$, the map 
$\|x\|_p := \left( \sum_{k=1}^n |x_k|^p \right)^{\frac{1}{p}}$
(for any $x\in\mathbb{R}^n$) is called the $\ell_p$-norm on 
$\mathbb{R}^n$. For $p=1$, the $\|\cdot\|_1$ norm is called an $\ell_1$ norm and is convex.
As $p \to 0$, the right term of this functional approximates 
the count of nonzeros or the so-called $\ell_0$ ``norm'': 
\begin{equation*}
   \|x\|_0 
   = \lim_{p \to 0} \|x\|_p 
   = \lim_{p \to 0} 
        \left( \sum_{k=1}^n |x_k|^p \right)^{1/p}.
\end{equation*}
For different $p$ values this measure is plotted in Figure \ref{fig:fvals_to_diff_p}. 

\begin{figure*}[h!]
\centerline{
\includegraphics[scale=0.3]{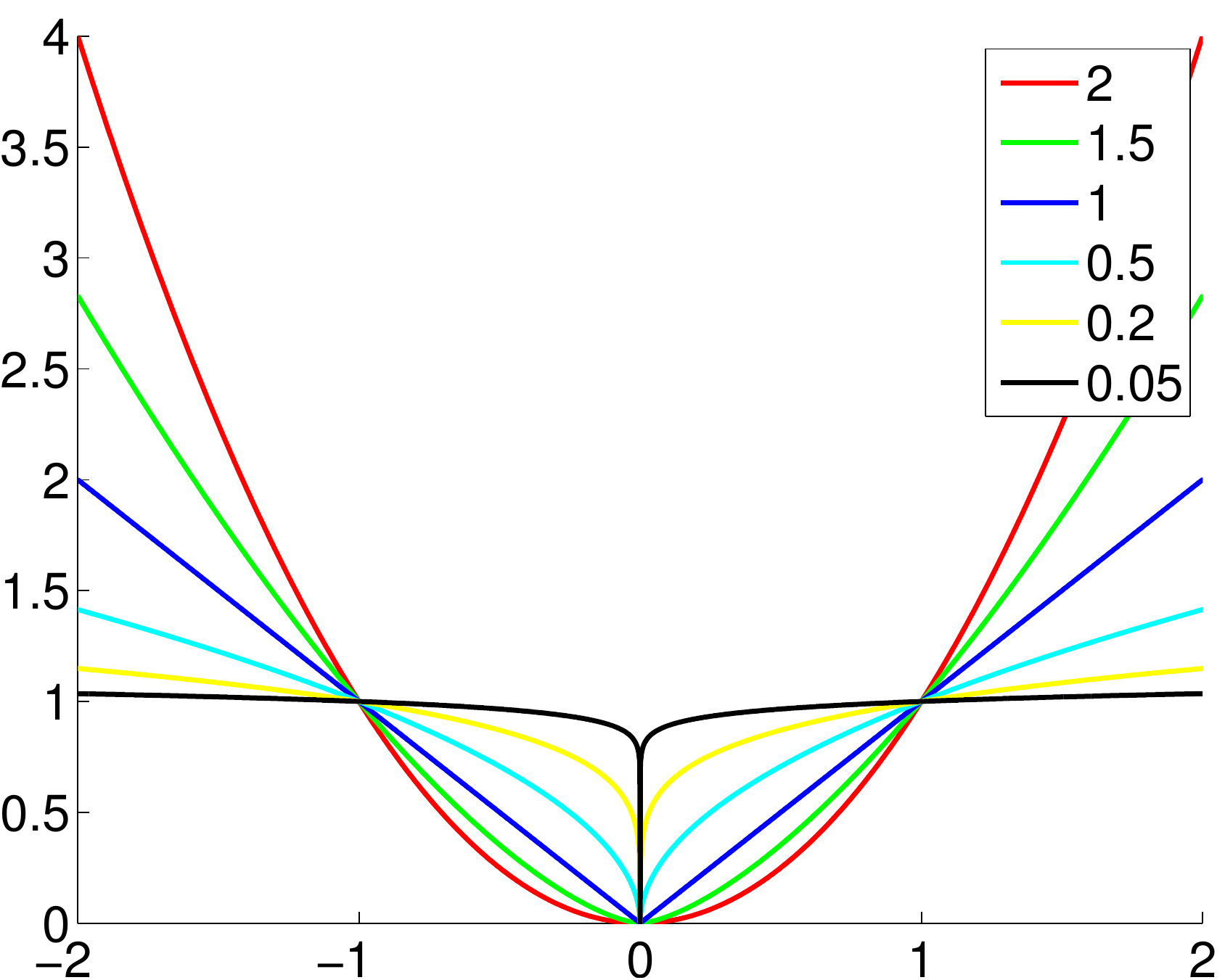}
}
\caption{$|x|^p$ plotted for different values of $p$; as $p \to 0$, the plot approaches an
indicator function.}
\label{fig:fvals_to_diff_p}
\end{figure*}
  
The non-smoothness of the family of functionals $F_{l,p}(x)$ complicates their minimization 
from an algorithmic point of view. The non-smooth part of \eqref{eq:lp_funct}
is due to the absolute value function $g(x_k) = |x_k|$ or of $h(r_k) = |(A x - b)_k|$, 
or both, depending on the values of $l$ and $p$. 
Because the gradient of $F_{l,p}(x)$ cannot be obtained when $l$ or $p$ are less than $2$, 
different minimization techniques such as sub-gradient methods are 
frequently used \cite{ShorMinimization}.
For the convex $l=2, p=1$ case in \eqref{eq:lp_funct}, various thresholding based 
methods have become popular. A particularly successful example 
is the soft thresholding 
based method FISTA \cite{MR2486527}. This algorithm is an accelerated version of a soft thresholded 
Landweber iteration \cite{MR0043348}:
\begin{equation}
\label{eq:ista}
x^{n+1} = \mathbb{S}_{\frac{\lambda}{2}}(x^n + A^T b - A^T A x^n) .
\end{equation}
The soft thresholding function $\mathbb{S}_{\lambda}:\mathbb{R}^n\to \mathbb{R}^n$ \cite{ingrid_thresholding1} is defined by
\begin{equation*}
   \left(\mathbb{S}_{\lambda}(x)\right)_k = \sgn(x_k) \max{\{0, |x_k| - \lambda\}}, \ \forall\, k=1,\ldots,n,\ \forall\, x\in\mathbb{R}^n.
\end{equation*}
The scheme \eqref{eq:ista} is known to converge from some initial guess, but slowly, to the 
$\ell_1$ minimizer \cite{ingrid_thresholding1}.
The thresholding in \eqref{eq:ista} is performed on 
$x^n - \nabla_x (\frac{1}{2} \|Ax^n - b\|_2^2) = x^n - A^T (A x^n - b)$, which is a very 
simple gradient based scheme with a constant line search \cite{EnglRegularization}. 
The thresholding based schemes typically 
require many iterations to converge and this is costly 
(due to the many matrix-vector mults required) when $A$ is large. Moreover, the regularization 
parameter $\lambda$ is often not known in advance. Instead, it is frequently estimated using 
a variant of the L-curve technique \cite{hansen1999curve} together with a 
continuation scheme, where $\lambda$ is iteratively decreased, reusing the previous solution 
as the initial guess at the next lower $\lambda$. This requires many iterations.  

In this article, we discuss two approaches to obtaining approximate solutions to  
\eqref{eq:lp_funct} using an accelerated conjugate gradient approach, 
with a specific focus on the case of large matrix $A$, where the 
use of thresholding based techniques is expensive, due to the many iterations required. 
In contrast, our methods accomplish similar work in fewer iterations, because each iteration 
is more powerful than that of a thresholding scheme. 
We consider specifically the case $1 \leq l,p \leq 2$ since for $0<l,p<1$, 
\eqref{eq:lp_funct} is not convex. However, the minimization of non-smooth non-convex functions 
has been shown to produce good results in some compressive sensing applications 
\cite{Chartrand09fastalgorithms} and our methods can be applied also to the non-convex case, 
as long as care is taken to avoid local minima.

The first approach is based on the conjugate gradient acceleration of the 
reweighted least squares idea developed in \cite{voronin2012regularization} with 
further developments and analysis given in \cite{fornasier2016conjugate}. 
The approach is based on a two norm approximation of the absolute value function:
\begin{equation*}
|x_k| = \frac{x_k^2}{|x_k|} = \frac{x_k^2}{\sqrt{x_k^2}} \approx \frac{x_k^2}{\sqrt{x_k^2 + \epsilon^2}}
\end{equation*}
where in the rightmost term, a small $\epsilon \neq 0$ is used, 
to insure the denominator is finite, regardless of the value of $x_k$. 
Thus, at the $n$-th iteration, a reweighted $\ell_2$-approximation to the 
$\ell_1$-norm of $x$ is of the form:
\begin{equation*}
\|x\|_1 \approx \displaystyle\sum_{k=1}^N \frac{x_k^2}{\sqrt{(x^n_k)^2 + \epsilon_n^2}} = \displaystyle\sum_{k=1}^N \tilde{w}^n_k x_k^2
\end{equation*}
where the right hand side is a reweighted two-norm with weights:
\begin{equation*}
\tilde{w}^n_k = \frac{1}{\sqrt{(x^n_k)^2 + \epsilon_n^2}}.
\end{equation*}
It follows that $\sum_k \tilde{w}^n_k (x^n_k)^2$ is a close approximation
to $\|x^n\|_1$, which proceeds to $\|x\|_1$ as $n \to \infty$. 
Given the smooth approximation resulting 
from the reweighted two norm, the gradient acceleration idea is then built on top 
of the least squares approximations. A slight generalization of the weights makes the approach 
applicable to \eqref{eq:lp_funct} with $l=2$. With the aid of results from \cite{scales1988fast} and the assumption that the residuals 
are nonzero, we are able to extend the algorithm to the general case in \eqref{eq:lp_funct}.

The second approach is based on smooth approximations to the non-smooth 
absolute value function $g(t) = |t|$, computed via convolution with a Gaussian function, 
described in detail in \cite{voronin2014convolution}.
Starting from the case $l=2$, we replace the non-smooth objective function 
$\tilde{F}_{p}(x)$ by a smooth functional $H_{p,\sigma}(x)$, which is close to $\tilde{F}_{p}(x)$ in value 
(as the parameter $\sigma \to 0$). Since the approximating functional 
$H_{p,\sigma}(x)$ is smooth, 
we can compute its gradient vector $\nabla_x H_{p,\sigma}(x)$ and Hessian matrix 
$\nabla^2_x H_{p,\sigma}(x)$. We are then able to use gradient based 
algorithms such as conjugate gradients to approximately minimize $\tilde{F}_{p}(x)$ 
by working with the approximate functional and gradient pair. 
We also apply the approach to the general case 
(where we may have $l \neq 2$) in \eqref{eq:lp_funct}, with the assumption that the residuals are nonzero.

In this paper, we describe the use of both acceleration approaches and the generalized functional, and give 
some practical examples from Geophysics and of wavelet based model reconstructions.

\vspace{2.mm}

\section{Iteratively Reweighted Least Squares}
The iteratively reweighted least squares (IRLS) method, was originally presented 
in \cite{daubechies2010iteratively}. The algorithm presented here is from the work in 
\cite{voronin2012regularization}.
Several new developments have recently emerged. In particular, \cite{fornasier2016conjugate} provides 
the derivations for the practical implementation of the IRLS CG scheme 
(without running the CG algorithm to convergence at each iteration) 
while \cite{behtash2013convergence} provides some stability and convergence arguments in the presence of noise. In this 
article, we survey the method and present the extension of the algorithm to \eqref{eq:lp_funct}, 
without going into the mathematical details for the convergence arguments, 
which are provided in the above references and can be extended to \eqref{eq:lp_funct} with our assumptions.     
The basic idea of IRLS consists of a series of smooth approximations to the absolute value function:
\begin{equation*}
\|x\|_1 \approx \displaystyle\sum_{k=1}^N \frac{x_k^2}{\sqrt{(x^n_k)^2 + \epsilon_n^2}} = \displaystyle\sum_{k=1}^N \tilde{w}^n_k x_k^2
\end{equation*}
where the right hand side is a reweighted two-norm with weights:
\begin{equation}
\label{eq:irls_scheme_weights_ell1}
\tilde{w}^n_k = \frac{1}{\sqrt{(x^n_k)^2 + \epsilon_n^2}}.
\end{equation}
In \cite{voronin2015iteratively}, the non-CG version of the IRLS algorithm is considered with the generalized weights:
\begin{equation}
\label{eq:irls_scheme_weights}
w^n_k = \frac{1}{\left[(x^n_k)^2 + \epsilon_n^2\right]^\frac{2-p}{2}}.
\end{equation}
This results in the iterative scheme:
\begin{equation}
\label{eq:irls_scheme}
x^{n+1}_k = \frac{1}{1 + \frac{1}{2} \lambda p w^n_k} \left(x^n_k + (A^T b)_k - (A^T A x^n)_k\right) \quad \mbox{for} \quad k=1,\dots,N,
\end{equation}
which converges to the minimizer of \eqref{eq:lp_funct} for $l=2$ and $1 \leq p \leq 2$.
The iteratively reweighted least squares (IRLS) algorithm given by scheme 
\eqref{eq:irls_scheme} with weights \eqref{eq:irls_scheme_weights} follows from 
the construction of a surrogate functional \eqref{eq:Gdef}, 
as per Lemma \ref{lem:irls_surrogate_functional} below.

\begin{lemma}
\label{lem:irls_surrogate_functional}
Define the surrogate functional:
\begin{eqnarray}
\label{eq:Gfunct1}
\quad
G(x,a,w,\epsilon) &=& \|Ax - b\|_2^2 - \|A(x - a)\|_2^2 + \|x - a\|_2^2 \\
\label{eq:Gdef}
&+& \displaystyle\sum \frac{1}{2} \lambda \left( p w_k \left((x_k)^2 + \epsilon^2\right) + (2 - p) (w_k)^{\frac{p}{p - 2}} \right) \\
\nonumber
\end{eqnarray}
where $\epsilon_n = \min\left( \epsilon_{n-1}, (\|x^n - x^{n-1}\|_2 + \alpha)^{\frac{1}{2}} \right)$ with  
$\alpha \in (0,1)$. 
Then the minimization procedure
$~w^{n} = \arg\min_w G(x^{n},a,w,\epsilon_{n})~$
defines the iteration dependent weights:
\begin{equation}
\label{eq:witer}
w^{n}_k = \frac{1}{\left[(x^{n}_k)^2 + \epsilon_{n}^2\right]^\frac{2-p}{2}}.
\end{equation}
In addition, the minimization procedure
$x^{n+1} = \arg\min_x G(x,x^n,w^n,\epsilon_n)$, produces the iterative scheme:
\begin{equation}
\label{eq:xiter}
x^{n+1}_k = \frac{1}{1 + \frac{1}{2} \lambda p w^n_k} \left( (x^n)_k - (A^T A x^n)_k +  (A^T b)_k \right).
\end{equation}
which converges to the minimizer of \eqref{eq:lp_funct} for $l=2$ and $1 \leq p \leq 2$.
\end{lemma}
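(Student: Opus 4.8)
The plan is to treat this as a standard majorization--minimization argument: $G$ is a surrogate that is alternately minimized in $w$ (holding $x=x^n$ fixed) and in $x$ (holding $a=x^n$, $w=w^n$, $\epsilon=\epsilon_n$ fixed), and each partial minimization turns out to be an elementary convex problem with a closed-form solution. Throughout, one assumes the normalization $\|A\|_2 \le 1$, so that $\|x-a\|_2^2 - \|A(x-a)\|_2^2 \ge 0$ and $G$ genuinely majorizes the smoothed objective; this condition is what drives the convergence theory, though it is not needed for either partial minimization to be well posed, as the computations below will show.

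First I would carry out the minimization over $w$. Only the final sum in \eqref{eq:Gdef} depends on $w$, and it separates across the index $k$, so it suffices to minimize, for each $k$,
\[
\phi(w_k) = p\,w_k\big((x_k)^2+\epsilon^2\big) + (2-p)(w_k)^{\frac{p}{p-2}}
\]
over $w_k>0$. Differentiating and using the identities $\tfrac{p}{p-2}-1=\tfrac{2}{p-2}$ and $(2-p)\tfrac{p}{p-2}=-p$ collapses $\phi'(w_k)$ to $p\big((x_k)^2+\epsilon^2\big)-p\,(w_k)^{\frac{2}{p-2}}$; setting this to zero and raising to the power $\tfrac{p-2}{2}$ yields $w_k=\big((x_k)^2+\epsilon^2\big)^{\frac{p-2}{2}}$, which is exactly \eqref{eq:witer} with $x=x^n$ and $\epsilon=\epsilon_n$. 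For $1\le p<2$ the exponent $\tfrac{p}{p-2}$ is negative, so $(w_k)^{\frac{p}{p-2}}$ is convex on $(0,\infty)$ and the critical point is the unique global minimizer; the boundary value $p=2$ is handled by continuity, giving the constant weights $w_k=1$.

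Next I would minimize $G(x,x^n,w^n,\epsilon_n)$ over $x$. The essential observation---the reason the surrogate is useful---is that expanding $\|Ax-b\|_2^2-\|A(x-a)\|_2^2$ cancels the coupling term $x^\top A^\top A x$, so that, discarding $x$-independent constants, the functional reduces to
\[
\Psi(x) = \|x\|_2^2 - 2a^\top x - 2b^\top Ax + 2a^\top A^\top A x + \tfrac12\lambda p\sum_k w_k^n x_k^2 ,
\]
a strictly convex quadratic with positive-definite Hessian $2I+\lambda p\,\Diag(w_k^n)$. Setting $\nabla\Psi=0$ gives $\big(2I+\lambda p\,\Diag(w_k^n)\big)\,x = 2\big(a+A^\top b-A^\top A a\big)$, and since the matrix on the left is diagonal this decouples componentwise into \eqref{eq:xiter} after substituting $a=x^n$; positive definiteness guarantees it is the unique minimizer.

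Finally, the consistency of the scheme is confirmed by substituting the optimal weights back into $G$: a short computation shows $w_k^n\big((x_k)^2+\epsilon^2\big)=(w_k^n)^{\frac{p}{p-2}}=\big((x_k)^2+\epsilon^2\big)^{p/2}$, whence $G(x,x,w^n,\epsilon)=\|Ax-b\|_2^2+\lambda\sum_k\big((x_k)^2+\epsilon^2\big)^{p/2}$, a smoothed version of $\tilde F_p(x)$ that tends to $F_{2,p}(x)$ as $\epsilon\to 0$. The main obstacle is not these closed-form steps but the final convergence claim: one must show that alternating the two minimizations monotonically decreases the objective, that the prescribed $\epsilon_n$-schedule drives the smoothing to zero, and that the resulting limit solves \eqref{eq:lp_funct}. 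Since $F_{2,p}$ is convex for $1\le p\le 2$, any such limit is the global minimizer; the detailed argument follows the majorization--minimization analysis of \cite{daubechies2010iteratively,voronin2012regularization,fornasier2016conjugate} and, as the surrounding text notes, is deferred to those references.
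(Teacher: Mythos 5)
Your derivation is correct and is essentially the argument the paper relies on: the paper itself gives no proof here, deferring entirely to \cite{voronin2015iteratively}, and the alternating closed-form minimizations you carry out (the separable convex problem in $w_k$ collapsing to \eqref{eq:witer}, and the cancellation of $x^TA^TAx$ reducing the $x$-minimization to the diagonal linear system giving \eqref{eq:xiter}) are exactly the standard surrogate-functional computations used there. Your observations that $\|A\|_2\le 1$ is needed for the majorization property and that the convergence claim requires the separate monotonicity/$\epsilon_n$-schedule analysis of the cited references are likewise consistent with the paper's treatment.
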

The proof of the lemma is given in \cite{voronin2015iteratively}. The construction of 
the non-increasing sequence $(\epsilon_n)$ is important for convergence analysis. On the other hand, different choices 
can be used for implementations. The auxiliary $G(\dots)$ function is also used for the convergence proof 
and for the derivation of the algorithm. In particular, it is chosen to yield $\|x^n - x^{n-1}\| \to 0$ and 
to conclude the boundedness of the sequence of iterates $(x^n)$.
In the same paper, the FISTA style acceleration of the scheme is shown. 
In practice, however, a large number of iterations may be required for convergence and so the CG acceleration of 
the above scheme is of particular interest. 

\subsection{Conjugate gradient acceleration}

Acceleration via CG is accomplished by modifying the auxiliary functional \eqref{eq:Gfunct1}.
If we instead set, 
\begin{equation*}
G(x,w,\epsilon) = \|Ax - b\|_2^2 + \lambda \displaystyle\sum_{k=1}^N \left[ p w_k \left(x_k^2 + \epsilon^2\right) + (2-p) w_k^{\frac{p}{p-2}} \right],
\end{equation*}
then the two minimization problems:
\begin{equation*}
w^{n+1} = \arg\min_w G(x^{n+1},w,\epsilon_{n+1}) \quad ; \quad x^{n+1} = \arg\min_x G(x,w^n,\epsilon_n) 
\end{equation*}
give the same iteration dependent weights in \eqref{eq:witer} and the iterative scheme:
\begin{equation}
\label{eq:irls_cg1}
x^{n+1} = \arg\min_x \left\{ \|Ax - b\|_2^2 + \frac{1}{2} \lambda p \displaystyle\sum_{k=1}^N w^n_k x_k^2 \right\}.
\end{equation}
The details of convergence are given in \cite{voronin2012regularization}.
The choice of the non-increasing $(\epsilon_n)$ sequence is again crucial for convergence analysis, although simpler 
choices (e.g. $\epsilon_n = \|x^n - x^{n-1}\|$ can be programmed in practice). 
The choice 
\begin{equation}
\label{eq:irls_sys_epsilon_update1}
\epsilon^n = \min\left(\epsilon^{n-1}, |G(x^{n-2},w^{n-2},\epsilon_{n-2}) - G(x^{n-1},w^{n-1},\epsilon_{n-1})|^{\frac{\gamma}{2}} + \alpha^n \right),
\end{equation}
with $\alpha \in (0,1)$ and $0 < \gamma < \frac{2}{4 - p^2}$ is taken for showing convergence to the 
minimizer in \cite{voronin2012regularization}. 

We now look more closely at the optimization problem in \eqref{eq:irls_cg1} above. In particular, notice that we can 
write:
\begin{equation*}
\displaystyle\sum_{k=1}^N \frac{1}{2} \lambda p w^n_k x_k^2 = \displaystyle\sum_{k=1}^N (D^n_{kk})^2 x_k^2 = \|D^n x\|_2^2 
\end{equation*} 
where $D^n$ is an iteration dependent diagonal matrix with elements 
$D^n_{kk} = \sqrt{\frac{1}{2} \lambda p w^n_k}$.
This observation allows us to write the solution to the optimization problem at each iteration in terms of a linear system:
\begin{equation}
\label{eq:irls_cg2}
x^{n+1} = \arg\min_x \left\{ \|Ax - b\|_2^2 + \|D^n x\|_2^2 \right\} \implies \left(A^T A + (D^n)^T (D^n)\right) x^{n+1} = A^T b.
\end{equation}
In turn, the system in \eqref{eq:irls_cg2} can be solved using CG iterations. In practice, we need only an 
approximate solution to the above linear system at each iteration so the amount of CG iterations can be less than $5$ 
at each iteration $n$. In \cite{fornasier2016conjugate}, it is shown that this procedure (involving inexact CG 
solutions) gives convergence to the minimizer. 
Notice also that it is not necessary to build up the $D^n$ matrix. Instead, the matrix is applied to 
vectors at each iteration, which can be done using an array computation. 
In \cite{voronin2012regularization}, the application of the Woodbury matrix identity is explored, which 
can aid in cases where IRLS is applied to short and wide or tall and thin matrices.

\subsection{Application to generalized residual penalty}

Next, we consider the application of the IRLS scheme to \eqref{eq:lp_funct}.
In \cite{scales1988fast} an IRLS algorithm for the minimization of 
$\min_x \displaystyle\sum_{i=1}^m \left| \displaystyle\sum_{j=1}^n A_{ij} x_j - b_i \right|^l$ was developed. 
The generalized IRLS equations resulting from this problem are $A^T R^n A x^{n+1} = A^T R^n b$, where 
$R^n$ is a diagonal matrix with elements $l |r^n_i|^{l-2}$, where $r^n_i = (Ax^n - b)_i = \displaystyle\sum_{j=1}^n A_{ij} x_j - b_i$. 
This system was derived by setting the gradient to zero where it was assumed that $r_i \neq 0$ for all $i$. In this case 
\cite{scales1988fast}:
\begin{equation}
\label{eq:scales_derivative_calc1}
\frac{\partial}{\partial x_k} \displaystyle\sum_{i=1}^m |r_i(x)|^l = \displaystyle\sum_{i=1}^m l \sgn(r_i) |r_i|^{l-1} A_{ik} = 
\displaystyle\sum_{i=1}^m r_i l |r_i|^{l-2} A_{ik} = [A^T R (Ax - b)]_k
\end{equation}
where we make use of $\sgn(r_i) = \frac{r_i}{|r_i|}$. When $l=2$, the familiar normal equations are recovered. 
Notice that $r_i \neq 0$ for all $i$ is a strong but plausible assumption to make 
(for example, to enforce this, we can add a very small amount of additive noise to the right hand side; the effect of 
noise on stability and convergence of IRLS has been analyzed in \cite{behtash2013convergence}).  
However, the same cannot be said of $x_i$ e.g. for sparse solutions, where many components can equal zero.
If, with the assumption $r_i \neq 0$ for all $i$, we use the surrogate functional, 
\begin{equation*}
\tilde{G}(x,w,\epsilon) = \displaystyle\sum_{i=1}^m \left| \displaystyle\sum_{j=1}^n A_{ij} x_j - b_i \right|^l + \lambda \displaystyle\sum_{k=1}^N \left[ p w_k \left(x_k^2 + \epsilon^2\right) + (2-p) w_k^{\frac{p}{p-2}} \right]
\end{equation*}
it then follows that the resulting algorithm for the minimization of \eqref{eq:lp_funct} can be written as:
\begin{equation}
\label{eq:irls2}
\left(A^T R^n A + (D^n)^T (D^n)\right) x^{n+1} = A^T R^n b
\end{equation}
with the same diagonal matrix $D^n$ as before and with the diagonal matrix $R^n$ with 
diagonal elements $|r^n_i|^{l-2}$ (for $i$ where $|r^n_i| < \epsilon$, we can set the entry to $\epsilon^{l-2}$ with the 
choice of $\epsilon$ user controllable, tuned for a given application). 
As before, at each iteration $n$, the system in \eqref{eq:irls2} can be solved 
(approximately) using a few iterations of the CG algorithm (or some variant of the method, such as LSQR \cite{paige1982lsqr}). 
Below, we present the basic version of the IRLS CG algorithm. In practice, the parameter sequence $\epsilon_n$ for $D$ can be set to e.g. 
$\|x^n - x^{n-1}\|$ or the update in \eqref{eq:irls_sys_epsilon_update1} can be used. 
Fixing $\epsilon$ for $R$ is common. Varying it may be useful in the case that $p$ or $l$ less than $1$ are  
chosen, resulting in a non-convex problem. For large problems it is important not to form $D$ and $R$ matrices 
explicitly. Instead vectors $\vec{d}$ and $\vec{r}$ can be used to hold their diagonal elements (i.e. 
$D^{n} x = \vec{d}^{n} \circ x$).

\begin{algorithm}[!ht]
\SetKwInOut{Input}{Input}
\SetKwInOut{Output}{Output}
\caption{IRLS CG Algorithm
\label{algo:irlscg}}
\Input{An $m\times n$ matrix $A$, 
an initial guess $n \times 1$ vector $x^0$, 
a parameter $\lambda < \|A^T b\|_\infty$, 
a parameter $l\in[1,2]$, 
a parameter $p\in[1,2]$, 
a parameter $\epsilon\in(0,1)$,
a maximum number of iterations to perform $N$ and a maximum number 
of local CG iterations to perform $N_l$.}
\Output{A vector $\bar{x}$, 
close to either the global or local minimum of $F_{l,p}(x)$, 
depending on choice of $p,l$.}
\BlankLine
Set $\epsilon_{1} = \epsilon$. \\ 
\For{$n=1,\dots N$}{
Set weights $w^n_k = \frac{1}{\left[(x^{n}_k)^2 + \epsilon_n^2\right]^\frac{2-p}{2}}$. \\
Initialize vectors $\vec{d}^n$ and $\vec{r}^n$ for the diagonal elements of diagonal matrices $D^n$ and $R^n$, 
with $\vec{d}^n_k = \sqrt{\frac{1}{2}\lambda p w^n_k}$ and 
$\vec{r}^n_k = l |r_k|^{l-2}$, with $r_k = (A x^n - b)_k$, for $|r_k| > \epsilon$; and $r^n_k = l |\epsilon|^{l-2}$ for $|r_k| < \epsilon$. \\
Run $N_l$ iterations of CG on the system $\left(A^T R^n A + (D^n)^T (D^n)\right) x^{n+1} = A^T R^n b$. \\
Compute $\epsilon_{n+1}$. 
}
\end{algorithm}

\vspace{2.mm}

\section{Approximate mollifier approach via convolution}

In mathematical analysis, a smooth function $\psi_\sigma:\real\to\real$ 
is said to be a (non-negative) \textit{mollifier} 
if it has finite support,
is non-negative $(\psi \geq 0)$, 
and has area $\int_{\mathbb{R}} \psi(t) \dd t = 1$ \cite{DenkowskiNonlinearAnalysis}. 
For any mollifier $\psi$ and any $\sigma>0$, define the parametric function $\psi_\sigma:\mathbb{R}\to\mathbb{R}$ by: 
$\psi_{\sigma}(t) := \frac{1}{\sigma}\psi\left(\frac{t}{\sigma}\right)$, for all $t\in\real$.
Then $\{\psi_{\sigma}:\sigma>0\}$ is a family of mollifiers, 
whose support decreases 
as $\sigma \to 0$, but the volume under the graph always remains equal to one.
We then have the following important lemma for the approximation of 
functions, whose proof is given in \cite{DenkowskiNonlinearAnalysis}.
\begin{lemma}
For any continuous function $g\in L^1(\Theta)$ 
with compact support and $\Theta\subseteq\mathbb{R}$,
and any mollifier $\psi:\mathbb{R}\to\mathbb{R}$,
the convolution $\psi_{\sigma} * g$, which is the function defined
by:
\begin{equation*}
   (\psi_{\sigma} * g) (t)
   := \int_{\mathbb{R}} \psi_{\sigma} (t-s) g(s) \mathrm{d}s
   = \int_{\mathbb{R}} \psi_{\sigma} (s) g(t-s) \mathrm{d}s,
   \ \forall\, t\in\mathbb{R},
\end{equation*} 
converges uniformly to $g$ on $\Theta$, as $\sigma\to 0$.
\end{lemma}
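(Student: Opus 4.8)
The plan is to exploit the two defining properties of the mollifier family---unit mass and shrinking support---together with the uniform continuity of $g$. First I would use the normalization $\int_{\mathbb{R}} \psi_\sigma(s)\,\dd s = 1$ (which follows from $\int \psi = 1$ by the change of variables $s \mapsto s/\sigma$) to subtract a representation of $g(t)$ from the convolution. Writing the convolution in the second form displayed in the lemma, I obtain
\begin{equation*}
(\psi_\sigma * g)(t) - g(t) = \int_{\mathbb{R}} \psi_\sigma(s)\,\bigl[g(t-s) - g(t)\bigr]\,\dd s,
\end{equation*}
so that, using $\psi_\sigma \geq 0$,
\begin{equation*}
\bigl|(\psi_\sigma * g)(t) - g(t)\bigr| \leq \int_{\mathbb{R}} \psi_\sigma(s)\,\bigl|g(t-s) - g(t)\bigr|\,\dd s.
\end{equation*}
The whole problem is now reduced to controlling $|g(t-s) - g(t)|$ for the values of $s$ that matter, namely those in the support of $\psi_\sigma$.

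The key observation is that the support of $\psi_\sigma$ shrinks to the origin: if $\psi$ is supported in $[-M,M]$, then $\psi_\sigma$ is supported in $[-\sigma M, \sigma M]$, so only small shifts $s$ contribute. To turn this into a uniform estimate I would invoke the uniform continuity of $g$. Since $g$ is continuous and has compact support (extending it by zero outside its support gives a continuous function on all of $\mathbb{R}$), it is uniformly continuous on $\mathbb{R}$. Hence, given any $\eta > 0$, there is a $\delta > 0$, independent of $t$, such that $|g(u) - g(v)| < \eta$ whenever $|u - v| < \delta$. Choosing $\sigma$ small enough that $\sigma M < \delta$ guarantees $|s| < \delta$ on the support of $\psi_\sigma$, and therefore
\begin{equation*}
\bigl|(\psi_\sigma * g)(t) - g(t)\bigr| \leq \int_{-\sigma M}^{\sigma M} \psi_\sigma(s)\,\eta\,\dd s \leq \eta \int_{\mathbb{R}} \psi_\sigma(s)\,\dd s = \eta.
\end{equation*}
Crucially, the bound $\eta$ does not depend on $t$, which is exactly the uniform convergence asserted.

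The step I expect to require the most care is the passage to uniform continuity, since this is where the compact-support hypothesis is genuinely used: a merely continuous $L^1$ function need not be uniformly continuous, and without uniformity the final estimate would only yield pointwise convergence. A minor related point to check is the behaviour of $g$ near the boundary of $\Theta$---one must ensure that the zero-extension of $g$ to $\mathbb{R}$ is continuous so that the uniform-continuity argument applies globally; this is automatic when the compact support of $g$ lies in the interior of $\Theta$. With these points settled, the estimate above holds for all sufficiently small $\sigma$ simultaneously in $t$, completing the proof.
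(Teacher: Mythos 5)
Your proof is correct: the paper does not actually supply a proof of this lemma (it defers to the cited reference on nonlinear analysis), and your argument is the standard one that such a reference would give --- unit mass of $\psi_\sigma$ to write the difference as $\int \psi_\sigma(s)\bigl[g(t-s)-g(t)\bigr]\,\dd s$, shrinking support to restrict to $|s|\le\sigma M$, and uniform continuity of the compactly supported continuous $g$ to get a $t$-independent bound. Your closing caveat is also well placed: the lemma as stated is slightly loose, since if the support of $g$ meets the boundary of $\Theta$ the zero-extension need not be continuous and uniform convergence can fail near those points, so the hypothesis should be read as the support lying in the interior of $\Theta$.
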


\subsection{Smooth approximation to the absolute value function}
Motivated by the above results, we will use convolution with 
approximate mollifiers to approximate the absolute value function 
$g(t) = |t|$ (which is not in $L^1(\mathbb{R})$) with a smooth function.
We start with the Gaussian function $K(t) = \frac{1}{\sqrt{2 \pi}} \exp\left(-\frac{t^2}{2}\right)$ (for all $t\in\real$),
and introduce the $\sigma$-dependent family: 
\begin{equation}
\label{eq:gaussian_mollifier1}
   K_{\sigma}(t) := 
   \frac{1}{\sigma} K\left(\frac{t}{\sigma}\right) = 
   \frac{1}{\sqrt{2 \pi \sigma^2}} 
      \exp\left( -\frac{t^2}{2\sigma^2} \right), 
   \quad\forall\, t\in\mathbb{R}.
\end{equation} 
This function is an approximate mollifier since strictly speaking, it does not have finite 
support. However, this function is coercive, that is, 
for any $\sigma>0$, $K_{\sigma}(t) \to 0$ as $|t|\to\infty$. 
In addition, we have that 
$\int_{-\infty}^{\infty} K_{\sigma}(t) \, \mathrm{d}t=1$ for all $\sigma>0$:
\begin{eqnarray*}
   \int_{-\infty}^{\infty} \! K_{\sigma}(t) \, \mathrm{d}t 
   &=& 
   \frac{1}{\sqrt{2 \pi \sigma^2}} 
     \int_{\mathbb{R}} \exp\left(-\frac{t^2}{2\sigma^2} \right) \, \mathrm{d}t 
    = 
   \frac{2}{\sqrt{2 \pi \sigma^2}} 
     \int_{0}^{\infty} \exp\left(-\frac{t^2}{2\sigma^2} \right) \, \mathrm{d}t 
\\ 
   &=& \frac{2}{\sqrt{2 \pi \sigma^2}} 
     \int_{0}^\infty \exp(-u^2) \sqrt{2} \sigma \, \mathrm{d}u
    = \frac{2}{\sqrt{2 \pi \sigma^2}} \sqrt{2} 
      \sigma \frac{\sqrt{\pi}}{2}
    = 1.
\end{eqnarray*}
Figure \ref{fig:Ksigma_plot} below presents a plot of the function 
$K_{\sigma}$ in relation to the particular choice $\sigma = 0.01$. 
We see that $K_{\sigma}(t) \geq 0$ and $K_{\sigma}(t)$ 
is very close to zero for $|t| > 4 \sigma$. 
In this sense, the function $K_\sigma$ is an approximate mollifier. 

\begin{figure*}[!ht]
\centerline{
\includegraphics[scale=0.25]{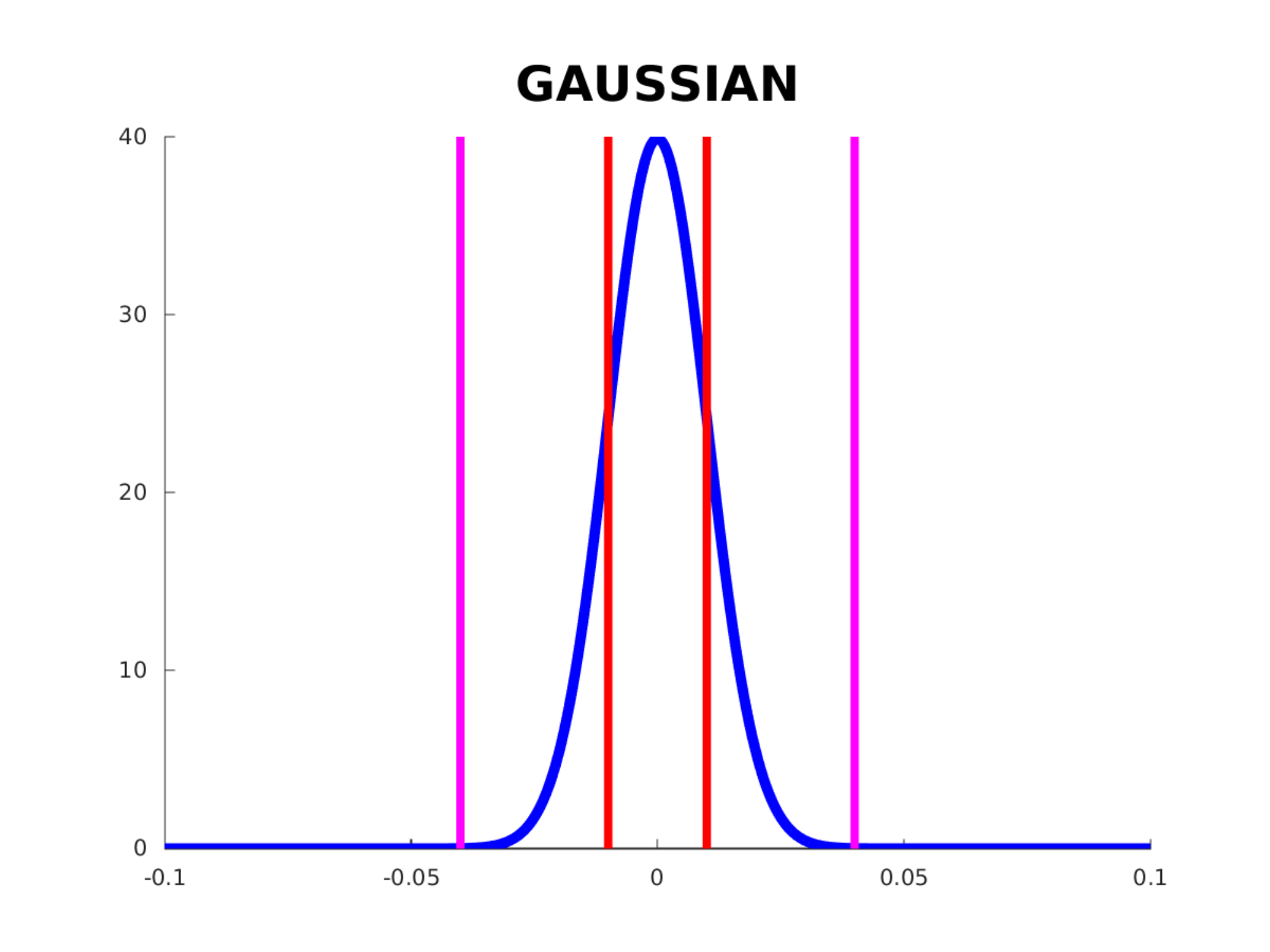}
\includegraphics[scale=0.25]{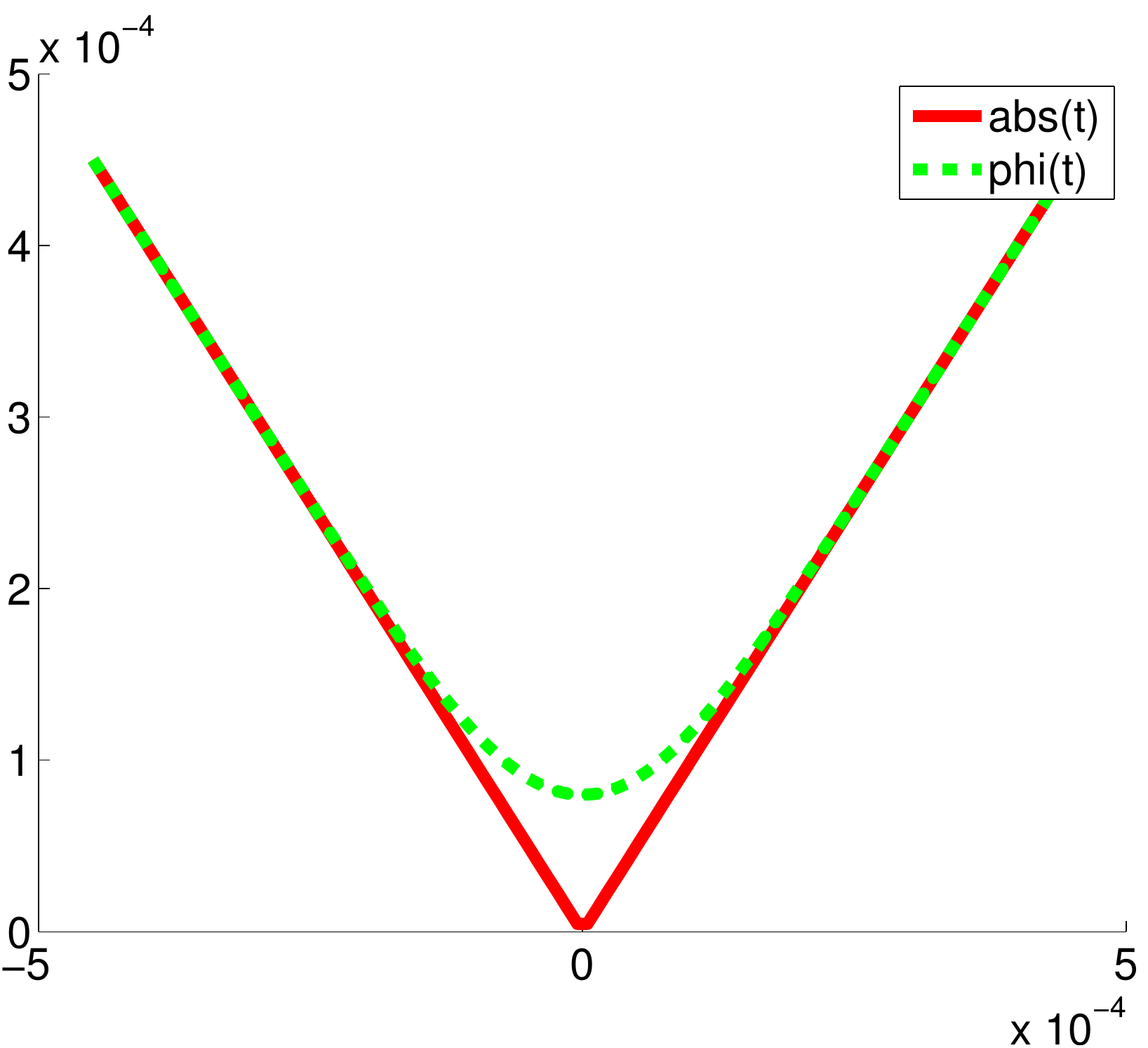}
\includegraphics[scale=0.25]{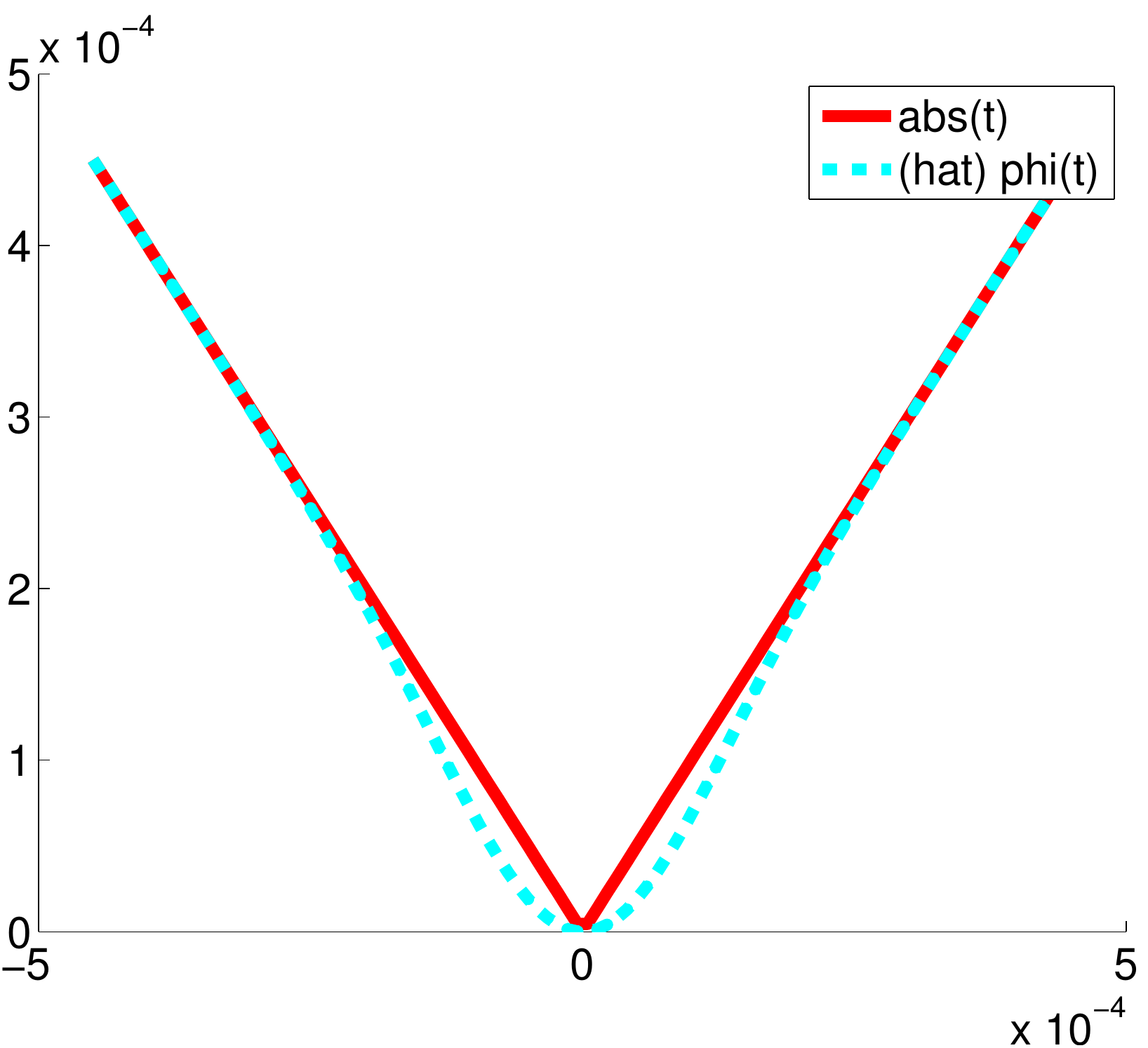}
}
\caption{$K_{\sigma}(t)$ and vertical lines at $(-\sigma, \sigma)$ and 
$(-4 \sigma, 4 \sigma)$ for $\sigma = 0.01$. Convolution based approximations to $|t|$.\label{fig:Ksigma_plot}}
\end{figure*}

Let us now compute the limit $\lim_{\sigma \to 0} K_{\sigma}(t)$. 
For $t = 0$, it is immediate that $\lim_{\sigma \to 0} K_{\sigma}(0) = \infty$. For $t\neq0$, we use 
l'H\^{o}pital's rule:
\begin{equation*}
   \lim_{\sigma \to 0} K_{\sigma}(t) = 
   \lim_{\sigma \to 0} \frac{1}{\sqrt{2 \pi \sigma^2}} 
     \exp\left( -\frac{t^2}{2\sigma^2} \right) = 
   \lim_{\gamma \to \infty} \frac{\gamma}{\sqrt{2 \pi} 
     \exp\left(\frac{\gamma^2t^2}{2}\right)} = 
   \frac{1}{\sqrt{2 \pi}} \lim_{\gamma \to \infty} 
     \frac{1}{\gamma\, t^2 \exp\left(\frac{\gamma^2t^2}{2}\right)} = 0,
\end{equation*}
with $\gamma = \frac{1}{\sigma}$. 
We see that $K_{\sigma}(t)$ behaves 
like a Dirac delta function $\delta_0(x)$ with unit integral over 
$\mathbb{R}$ and the same pointwise limit. 
Thus, for small $\sigma>0$, 
we expect that the absolute value function can be approximated 
by its convolution with $K_\sigma$, i.e., 
\begin{equation}
\label{eq:x_k_approx1}
   |t| \approx 
   \phi_\sigma(t),
   \quad\forall\, t\in\real,
\end{equation}
where the function $\phi_\sigma:\real\to\real$ is defined as the
convolution of $K_\sigma$ with the absolute value function:
\begin{equation}
\label{eq:phisigma}
   \phi_\sigma(t):=
   (K_{\sigma} * |\cdot|)(t) =
      \frac{1}{\sqrt{2 \pi \sigma^2}} 
      \int_{-\infty}^{\infty} |t - s|
      \exp\left( -\frac{s^2}{2\sigma^2} \right) 
      \mathrm{d}s,
   \quad\forall\, t\in\real.
\end{equation}
\noindent
We show in Proposition \ref{prop:conv} below, 
that the approximation in \eqref{eq:x_k_approx1} converges 
in the $L^1$ norm (as $\sigma \to 0$).
The advantage of using this approximation is that 
$\phi_\sigma$, unlike the absolute value function, 
is a smooth function. 

Before we state the convergence result in 
Proposition \ref{prop:conv}, 
we express the convolution integral and 
its derivative in terms of the well-known error function \cite{HandbookOfMathFunctions}.

\begin{lemma}
\label{lem:convolution}
For any $\sigma>0$, define $\phi_\sigma:\real\to\real$ 
as in \eqref{eq:phisigma}
Then we have that for all $t\in\real$:
\begin{align}
\label{eq:convolution}
   \phi_\sigma(t) 
   \,=&\ t \erf\left( \frac{t}{\sqrt{2} \sigma} \right) 
     + \sqrt{\frac{2}{\pi}} \sigma 
         \exp\left(-\frac{t^2}{2 \sigma^2}\right),
\\
\label{eq:phisigma_derivative}
   \frac{\dd}{\dd t}\,\phi_\sigma(t) 
   \,=&\ 
       \erf\left(\frac{t}{\sqrt{2} \sigma}\right),
\end{align} 
where the error function is defined as: 
\begin{equation*}
   \erf(t) = \frac{2}{\sqrt{\pi}} 
             \int_{0}^{t} \exp(-u^2) \dd u
   \quad\forall\, t\in\real.
\end{equation*}
\end{lemma}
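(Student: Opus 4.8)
The plan is to evaluate the convolution integral in \eqref{eq:phisigma} directly, exploiting the piecewise-linear structure of the integrand. First I would split the absolute value at the point where its argument changes sign: since $|t-s| = t-s$ for $s \le t$ and $|t-s| = s-t$ for $s \ge t$, I can write
\begin{equation*}
\phi_\sigma(t) = \frac{1}{\sqrt{2\pi\sigma^2}}\left[\int_{-\infty}^{t}(t-s)\exp\left(-\frac{s^2}{2\sigma^2}\right)\dd s + \int_{t}^{\infty}(s-t)\exp\left(-\frac{s^2}{2\sigma^2}\right)\dd s\right].
\end{equation*}
I would then regroup the four resulting pieces into a part carrying the constant-in-$s$ factor $t$ and a part carrying the factor $s$, and treat the two separately.

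For the $s$-weighted terms the antiderivative $\int s\exp(-s^2/2\sigma^2)\,\dd s = -\sigma^2\exp(-s^2/2\sigma^2)$ is elementary, and evaluating between the appropriate limits produces $2\sigma^2\exp(-t^2/2\sigma^2)$; dividing by the normalization $\sqrt{2\pi\sigma^2}$ gives exactly the second summand $\sqrt{2/\pi}\,\sigma\exp(-t^2/2\sigma^2)$ of \eqref{eq:convolution}. For the $t$-weighted terms I would write them as $t\bigl(\int_{-\infty}^t - \int_t^\infty\bigr)\exp(-s^2/2\sigma^2)\,\dd s$, use the total mass $\int_{\mathbb{R}}\exp(-s^2/2\sigma^2)\,\dd s = \sqrt{2\pi\sigma^2}$ to rewrite this as $t\bigl(2\int_{-\infty}^t - \int_{\mathbb{R}}\bigr)\exp(-s^2/2\sigma^2)\,\dd s$, and then convert the resulting Gaussian cumulative distribution to the error function via the identity $\frac{1}{\sqrt{2\pi\sigma^2}}\int_{-\infty}^{t}\exp(-s^2/2\sigma^2)\,\dd s = \tfrac12\bigl(1 + \erf(t/(\sqrt2\,\sigma))\bigr)$. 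This collapses the $t$-weighted contribution to $t\,\erf(t/(\sqrt2\,\sigma))$, establishing \eqref{eq:convolution}.

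For the derivative \eqref{eq:phisigma_derivative} I would simply differentiate the closed form just obtained, using $\erf'(u) = \frac{2}{\sqrt\pi}\exp(-u^2)$. The term $t\,\erf(t/(\sqrt2\,\sigma))$ contributes $\erf(t/(\sqrt2\,\sigma))$ plus a leftover $\sqrt{2/\pi}\,(t/\sigma)\exp(-t^2/2\sigma^2)$ from the product rule, while differentiating the Gaussian summand contributes exactly $-\sqrt{2/\pi}\,(t/\sigma)\exp(-t^2/2\sigma^2)$; these two leftover terms cancel, leaving $\erf(t/(\sqrt2\,\sigma))$. Alternatively one can differentiate under the integral sign using $\frac{\dd}{\dd t}|t-s| = \sgn(t-s)$ and identify the same error function, which is conceptually cleaner but requires a dominated-convergence justification for passing the derivative inside. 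I do not expect any deep obstacle here: the calculation is essentially routine once the split at $s=t$ is made, and the only real care is bookkeeping, namely converting the one-sided Gaussian integrals into the error function with the correct normalization (the $\sqrt2$ inside $\erf$ and the passage through the normal cumulative distribution) and keeping the constants consistent across the split pieces, together with verifying the cancellation in the derivative.
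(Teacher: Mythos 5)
Your computation is correct: the split of $|t-s|$ at $s=t$, the elementary antiderivative for the $s$-weighted pieces, the conversion of the one-sided Gaussian integral to $\tfrac12\bigl(1+\erf(t/(\sqrt2\,\sigma))\bigr)$, and the cancellation of the product-rule leftovers in the derivative all check out. The paper itself gives no proof of Lemma~\ref{lem:convolution} (the proof environment is empty and the result is deferred to the cited reference), so your direct evaluation is the standard argument and serves as a valid self-contained substitute.
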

\begin{proof}
\end{proof}
The above lemma is proved in \cite{voronin2014convolution}.
Next, using the fact that the error function $\erf(t) = \frac{2}{\sqrt{\pi}} \int_{0}^t \exp(-s^2) \dd s$ satisfies the bounds:
\begin{equation}
\label{eq:erf_bounds}
   \bigl(1 - \exp(-t^2)\bigr)^\frac{1}{2} \leq 
   \erf(t) \leq 
   \bigl(1 - \exp(-2 t^2)\bigr)^\frac{1}{2},
      \quad\forall\, t\geq0,
\end{equation}
the following convergence result can be established:
\begin{proposition}
\label{prop:conv}
Let $g(t):=|t|$ for all $t\in\real$, 
and let the function $\phi_\sigma:=K_\sigma*g$
be defined as in \eqref{eq:phisigma}, for all $\sigma>0$.
Then:
\begin{equation*}
\lim_{\sigma \to 0} \left\|\phi_{\sigma} - g\right\|_{L^1} = 0.
\end{equation*}
\end{proposition}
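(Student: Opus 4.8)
The plan is to work directly from the closed form for $\phi_\sigma$ supplied by Lemma~\ref{lem:convolution} and to reduce the claim to two elementary Gaussian integrals. First I would exploit parity: since $\erf$ is odd, $t\,\erf\bigl(t/(\sqrt{2}\sigma)\bigr) = |t|\,\erf\bigl(|t|/(\sqrt{2}\sigma)\bigr)$ for every $t\in\real$, so subtracting $g(t)=|t|$ from \eqref{eq:convolution} gives the pointwise identity
\begin{equation*}
\phi_\sigma(t) - |t| = -\,|t|\Bigl(1 - \erf\bigl(\tfrac{|t|}{\sqrt{2}\sigma}\bigr)\Bigr) + \sqrt{\tfrac{2}{\pi}}\,\sigma\,\exp\Bigl(-\tfrac{t^2}{2\sigma^2}\Bigr).
\end{equation*}
This is the conceptual heart of the argument: although $g\notin L^1(\real)$, the cancellation between the $t\,\erf$ term and $|t|$ leaves a complementary-error term that decays like a Gaussian, so $\phi_\sigma - g$ is genuinely integrable and the norm in the statement is finite to begin with.

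Next I would apply the triangle inequality to the integrand, which is legitimate even though the two terms above carry opposite signs, to obtain
\begin{equation*}
\|\phi_\sigma - g\|_{L^1} \le \int_{\real} |t|\,\Bigl(1 - \erf\bigl(\tfrac{|t|}{\sqrt{2}\sigma}\bigr)\Bigr)\,\dd t + \sqrt{\tfrac{2}{\pi}}\,\sigma \int_{\real} \exp\Bigl(-\tfrac{t^2}{2\sigma^2}\Bigr)\,\dd t .
\end{equation*}
The second integral reduces to the normalization computation already carried out above for $K_\sigma$ and contributes exactly $2\sigma^2$. For the first integral I would control the tail $1-\erf$ using the lower bound in \eqref{eq:erf_bounds}: combined with the elementary estimate $1 - \sqrt{1-x}\le x$ for $x\in[0,1]$, it yields $1 - \erf(s)\le \exp(-s^2)$ for all $s\ge 0$, and hence $1 - \erf\bigl(|t|/(\sqrt{2}\sigma)\bigr) \le \exp\bigl(-t^2/(2\sigma^2)\bigr)$.

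With this bound in hand the first integral is dominated by $\int_{\real} |t|\exp(-t^2/(2\sigma^2))\,\dd t = 2\sigma^2$, a one-line substitution $v = t^2/(2\sigma^2)$. Adding the two pieces gives the explicit estimate $\|\phi_\sigma - g\|_{L^1} \le 4\sigma^2$, which tends to $0$ as $\sigma\to0$ and in fact yields a quadratic rate of convergence rather than merely qualitative convergence. I expect the only genuine subtlety to be the very first step: recognizing that the pointwise difference is integrable at all and writing it in the sign-separated form above. Once the identity $t\,\erf\bigl(t/(\sqrt{2}\sigma)\bigr) = |t|\,\erf\bigl(|t|/(\sqrt{2}\sigma)\bigr)$ is used to expose the $\bigl(1-\erf\bigr)$ factor, the remainder is a routine matter of bounding that factor by a Gaussian via \eqref{eq:erf_bounds} and evaluating standard moments of $\exp(-t^2/(2\sigma^2))$.
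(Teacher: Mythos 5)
Your proof is correct and follows essentially the route the paper intends: the paper defers the details to \cite{voronin2014convolution} but explicitly introduces the error-function bounds \eqref{eq:erf_bounds} as the key ingredient, which is precisely how you control the $1-\erf$ tail after cancelling $|t|$ against $t\,\erf\bigl(t/(\sqrt{2}\sigma)\bigr)$. Your explicit bound $\|\phi_\sigma - g\|_{L^1}\le 4\sigma^2$ is a nice quantitative sharpening of the stated qualitative limit.
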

The proof is again given in \cite{voronin2014convolution}.
While $g=|\cdot| \not \in L^1$ (since $g(t) \to \infty$ as 
$t \to \infty$), the approximation in the $L^1$ norm still holds. It is likely 
that the convolution approximation converges to $g$ in the $L^1$ norm for a variety of non-smooth coercive functions $g$, 
not just for $g(t) = |t|$. 

\subsection{Approximation at zero}
Note from \eqref{eq:x_k_approx1} that 
while the approximation $\phi_{\sigma}(t) = K_\sigma*|t|$ is indeed smooth,
it is positive on $\real$ and in particular $(K_\sigma*|\cdot|)(0)=\sqrt{\frac{2}{\pi}} \sigma>0$, 
although $(K_\sigma*|\cdot|)(0)$ does go to zero as $\sigma \to 0$. 
To address this, we can use different approximations based on $\phi_{\sigma}(t)$ which are zero 
at zero. Below, we describe several different alternatives which are possible. 
The first is formed by subtracting the value at $0$: 
\begin{equation}
\label{eq:x_k_approx2}
   \tilde{\phi}_{\sigma}(t) = \phi_\sigma(t)-\phi_\sigma(0)
   \,=\,
   t \erf \left( \frac{t}{\sqrt{2} \sigma} \right) 
   + \sqrt{\frac{2}{\pi}} \sigma \exp\left(\frac{-t^2}{2 \sigma^2}\right)
   - \sqrt{\frac{2}{\pi}} \sigma .
\end{equation}
An alternative is to use $\tilde{\phi^{(2)}}_{\sigma}(t) = \phi_\sigma(t) - \sqrt{\frac{2}{\pi}} \sigma \exp\left(-t^2\right)$ 
where the subtracted term decreases in magnitude as $t$ becomes larger and only has much effect for $t$ close to zero. 
We could also simply drop the second term of $\phi_{\sigma}(t)$ to get:
\begin{equation}
\label{eq:x_k_approx3}
   \hat{\phi}_{\sigma}(t) = \phi_\sigma(t) - \sqrt{\frac{2}{\pi}} \sigma \exp\left(\frac{-t^2}{2 \sigma^2}\right)
   \,=\,
   t \erf \left( \frac{t}{\sqrt{2} \sigma} \right) 
\end{equation}
which is zero when $t = 0$. The behavior is plotted in Figure \ref{fig:Ksigma_plot}.

\subsection{Gradient Computations and Algorithms}

We now discuss algorithms for the approximate minimization of \eqref{eq:lp_funct} using the ideas we have developed. 
In particular, we will focus on the case $l=2$ resulting in the one parameter functional: 
\begin{equation}
\label{eq:lp_funct2}
   \tilde{F}_p(x)
   =
   \|Ax - b\|^2_2 
     + \lambda \left( \sum_{k=1}^n |x_k|^p \right).
\end{equation}
We obtain the smooth approximation functional to $\tilde{F}_p(x)$:
\begin{equation}
\label{eq:approx_lp}
\begin{array}{rcl}
   H_{p,\sigma}(x) 
   &:=&\displaystyle
   \|Ax - b\|^2_2 + \lambda \left( \sum_{k=1}^n 
      \phi_\sigma(x_k)^p \right) 
\\
   &=&\displaystyle
   \|Ax - b\|^2_2 + \lambda \left( \sum_{k=1}^n 
      \left( x_k \erf \left( \frac{x_k}{\sqrt{2} \sigma} \right) 
      + \sqrt{\frac{2}{\pi}} \sigma 
      \exp\left(\frac{-x_k^2}{2 \sigma^2}\right)\right)^p \right) .
\end{array}
\end{equation}
which is of similar form considered in detail in \cite{voronin2014convolution}.
To compute the gradient of the smooth functional it is enough to consider 
the function $G_p(x) = \displaystyle\sum_{k=1}^n \phi_{\sigma}(x_k)^p$. Taking the derivative 
with respect to $x_j$ yields:
\begin{equation*}
\frac{\partial}{\partial x_j} \phi_{\sigma}(x_j)^p = p \phi_{\sigma}(x_j)^{p-1} \phi_{\sigma}^{\prime}(x_j) = p \phi_{\sigma}(x_j)^{p-1} \erf\left( \frac{x_j}{\sqrt{2} \sigma} \right).
\end{equation*}
Taking the second derivative yields: 
\begin{equation*}
\frac{\partial^2}{\partial x_i \partial x_j} G_p(x) = 0, 
\end{equation*}
when $i \neq j$ and when $i=j$ we have:
\begin{eqnarray*}
\frac{\partial^2}{\partial x_j^2} G_p(x) &=& \frac{\partial}{\partial x_j} \left\{ \left[ p \phi_{\sigma}(x_j)^{p-1} \right] \left[ \erf\left( \frac{x_j}{\sqrt{2} \sigma} \right) \right] \right\} \\
&=& p(p-1) \phi_{\sigma}(x_j)^{p-2} \erf^2\left( \frac{x_j}{\sqrt{2} \sigma} \right) + \left[ p \phi_{\sigma}(x_j)^{p-1} \right] \frac{2}{\sqrt{2} \sqrt{\pi} \sigma} \exp\left( -\frac{x_j^2}{2 \sigma^2} \right) .
\end{eqnarray*}
The following results follow.
\begin{lemma}
Let $H_{p,\sigma}(x)$ be as defined in \eqref{eq:approx_lp} where $p>0$ and $\sigma > 0$.
Then the gradient is given by:
\begin{equation}
\label{eq:lp_gradient}
   \nabla {H_{p,\sigma}}(x) = 
   2 A^T (Ax - b) + \lambda p \bigl(\vec{v}(x)\bigr), 
\end{equation}
and the Hessian is given by:
\begin{equation}
\label{eq:lp_hessian}
   \nabla^2 H_{p,\sigma}(x) = 2 A^T A + \lambda p \Diag\bigl(\vec{w}(x)\bigr),
\end{equation}
where the functions $\vec{v}: \real^n\to\real^n$ and $\vec{w}:\real^n\to\real^n$ are defined for all $x\in\real^n$:
\begin{small}
\begin{align*}
   \vec{v}(x):= \left\{ v(x_j) \right\}_{j=1}^n = &\
\left\{ \phi_{\sigma}(x_j)^{p-1} 
   \erf\left(\frac{x_j}{\sqrt{2} \sigma}\right) \right\}_{j=1}^n \\
   \vec{w}(x):= \left\{ w(x_j) \right\}_{j=1}^n = &\ 
   \left\{
		(p-1) \phi_{\sigma}(x_j)^{p-2} \erf^2\left( \frac{x_j}{\sqrt{2} \sigma} \right) + \left[ \phi_{\sigma}(x_j)^{p-1} \right] \sqrt{\frac{2}{\pi \sigma^2}} \exp\left( - \frac{x_j^2}{2 \sigma^2} \right)
   \right\}_{j=1}^n.
\end{align*}
\end{small}
\end{lemma}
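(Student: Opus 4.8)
The plan is to exploit the additive structure $H_{p,\sigma}(x) = \|Ax-b\|_2^2 + \lambda\,G_p(x)$ with $G_p(x) = \sum_{k=1}^n \phi_\sigma(x_k)^p$, and differentiate the two summands separately, since the gradient and Hessian are linear in the objective. The quadratic data-fidelity term contributes the classical pieces: expanding $\|A(x+h)-b\|_2^2$ and collecting the first- and second-order terms in $h$ gives gradient $2A^T(Ax-b)$ and Hessian $2A^TA$, which are exactly the first summands of \eqref{eq:lp_gradient} and \eqref{eq:lp_hessian}. All the remaining work is the per-coordinate differentiation of the separable penalty $G_p$.

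Before invoking the power rule, there is one preliminary I would insist on checking, and this is the only genuinely delicate point: for non-integer $p$ the map $t\mapsto\phi_\sigma(t)^p$ is smooth only where $\phi_\sigma(t)>0$, and the factors $\phi_\sigma^{p-1}$ and $\phi_\sigma^{p-2}$ appearing in $\vec v$ and $\vec w$ are likewise only well-defined there. So I would first establish that $\phi_\sigma$ is strictly positive on all of $\real$. Using the closed form from Lemma~\ref{lem:convolution}, namely $\phi_\sigma(t) = t\,\erf(t/(\sqrt2\sigma)) + \sqrt{2/\pi}\,\sigma\exp(-t^2/(2\sigma^2))$, and the fact that $\erf$ is odd so that $t\,\erf(t/(\sqrt2\sigma)) = |t|\,\erf(|t|/(\sqrt2\sigma)) \ge 0$, the strictly positive Gaussian term forces
\begin{equation*}
\phi_\sigma(t) \;\ge\; \sqrt{\tfrac{2}{\pi}}\,\sigma\exp\!\left(-\tfrac{t^2}{2\sigma^2}\right) \;>\; 0 .
\end{equation*}
With this in hand, $\phi_\sigma^p$ is a composition of smooth maps for every $p>0$, and the power and chain rules apply without caveat.

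Next I would compute the gradient of $G_p$. Because the $k$-th summand of $G_p$ depends on $x_k$ alone, only the $j$-th term survives in $\partial G_p/\partial x_j$, and the chain rule together with $\phi_\sigma'(t)=\erf(t/(\sqrt2\sigma))$ from \eqref{eq:phisigma_derivative} gives
\begin{equation*}
\frac{\partial}{\partial x_j} G_p(x) \;=\; p\,\phi_\sigma(x_j)^{p-1}\,\erf\!\left(\frac{x_j}{\sqrt2\,\sigma}\right) \;=\; p\,v(x_j).
\end{equation*}
Multiplying by $\lambda$ and stacking over $j$ recovers the $\lambda p\,\vec v(x)$ term of \eqref{eq:lp_gradient}.

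For the Hessian, separability does the decisive work: since $\partial G_p/\partial x_j$ depends only on $x_j$, the mixed partials $\partial^2 G_p/\partial x_i\partial x_j$ vanish for $i\neq j$, so the penalty Hessian is diagonal and the off-diagonal structure reduces entirely to that of $2A^TA$. The diagonal entry comes from differentiating $p\,\phi_\sigma(x_j)^{p-1}\erf(x_j/(\sqrt2\sigma))$ once more in $x_j$, via the product and chain rules and again \eqref{eq:phisigma_derivative}; factoring out $p$ and using the identity $2/(\sqrt2\sqrt\pi\,\sigma)=\sqrt{2/(\pi\sigma^2)}$ identifies the remaining bracket with $w(x_j)$, which yields the $\lambda p\,\Diag(\vec w(x))$ term of \eqref{eq:lp_hessian}. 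I expect no obstacle in these last two steps beyond bookkeeping; the substantive content is entirely the strict-positivity check of the second paragraph, without which the non-integer powers would be neither defined nor differentiable.
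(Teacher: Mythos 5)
Your proposal follows essentially the same route as the paper: differentiate the separable penalty $G_p(x)=\sum_k\phi_\sigma(x_k)^p$ coordinate-wise using the chain and product rules together with $\phi_\sigma'(t)=\erf(t/(\sqrt2\,\sigma))$ from Lemma~\ref{lem:convolution}, and add the standard gradient and Hessian of the quadratic data term. The only difference is your preliminary verification that $\phi_\sigma(t)\ge\sqrt{2/\pi}\,\sigma\exp(-t^2/(2\sigma^2))>0$, which the paper omits but which is a worthwhile addition since it justifies the non-integer powers $\phi_\sigma^{p-1}$ and $\phi_\sigma^{p-2}$.
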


Given $H_{p,\sigma}(x) \approx \tilde{F}_p(x)$ and $\nabla H_{p,\sigma}(x)$, we can apply a number of gradient based methods for the minimization of $H_{p,\sigma}(x)$ (and hence for the approximate minimization 
of $\tilde{F}_p(x)$), which take the following general form:

\begin{algorithm}[ht!]
\SetKwInOut{Input}{Input}
\SetKwInOut{Output}{Output}
\caption{Generic Gradient Method for finding $\arg\min H_{p,\sigma}(x)$.}
\label{algo:gradient_method}
\BlankLine
Pick an initial point $x^0$\;
\For{$n=0,1,\ldots$,\texttt{\textup{ maxiter}}}{
Compute search direction $s^n$ based on gradient 
$\nabla H_{p,\sigma}(x^n)$. \;
Compute step size parameter $\mu$ via line search. \;
Update the iterate: $x^{n+1} = x^n + \mu s^n$. \;
Check if the termination conditions are met. \;
}
Record final solution: $\bar{x} = x^{n+1}$. \;
\end{algorithm}
Note that in the case of $p<1$, the functional $F_p(x)$ is not convex, so such an algorithm may 
not converge to the global minimum in that case. The generic algorithm above depends on the choice 
of search direction $s^n$, which is based on the gradient, and the line search, 
which can be performed in several different ways. 

\noindent
\subsection{Line Search Techniques}
Gradient based algorithms differ based on 
the choice of search direction vector $s^n$ and 
line search techniques for parameter $\mu$. 
In this section we describe some suitable line search 
techniques. 
Given the current iterate $x^n$ and search direction $s^n$, 
we would like to choose $\mu$ so that:
\begin{equation*} 
   H_{p,\sigma}(x^{n+1})
   = H_{p,\sigma}(x^n + \mu s^n)
   \leq H_{p,\sigma}(x^n),
\end{equation*}
where $\mu>0$ is a scalar which measures how long along 
the search direction we advance from the previous iterate. 
Ideally, we would 
like a strict inequality and the functional value to decrease. 
Exact line search would solve the single variable 
minimization problem:
\begin{equation*}
   \bar{\mu} = \arg\min_{\mu} H_{p,\sigma}(x^n + \mu s^n) .
\end{equation*}
The first order necessary optimality condition
(i.e., $\nabla H_{p,\sigma}(x + \mu s)^T s=0$) can be used to find a 
candidate value for $\mu$, but it is not easy to solve the gradient equation. 
Instead, using the second order Taylor approximation of $n(t):=H_{p,\sigma}(x+t s)$ at any given $x,s\in\real^n$, 
we have that 
\begin{equation}
\label{eq:taylor_expansion_of_H}
   n^{\prime}(t)=n^{\prime}(0)+t n^{\prime \prime}(0)+o(t) \approx n^{\prime}(0)+t n^{\prime \prime}(0)
\end{equation}
using basic matrix calculus:
\begin{eqnarray*}
   n^{\prime}(t) 
   &=& 
   \left( \nabla H_{p,\sigma}(x+ts) \right)^T s 
   \implies 
   n^{\prime}(0) = \nabla H_{p,\sigma}(x)^T s 
\\
   n^{\prime \prime}(t) 
   &=& \left[ \left( \nabla^2 H_{p,\sigma}(x + ts) \right)^T s \right]^T s =  
   s^T \nabla^2 H_{p,\sigma}(x + ts) s 
   \implies 
   n^{\prime \prime}(0) = s^T \nabla^2 H_{p,\sigma}(x) s,
\end{eqnarray*}
we get that $n^{\prime}(0)+\mu n^{\prime \prime}(0)=0$ if and only if
\begin{equation}
\label{eq:mu_approx_sol}
\mu = -\frac{\nabla H_{p,\sigma}(x)^T s}{s^T \nabla^2 H_{p,\sigma}(x) s}.
\end{equation}

An alternative approach is to use a backtracking line search  
to get a step size $\mu$ that satisfies one or two of 
the Wolfe conditions \cite{nocedal_wright_opt}. 
This update scheme can be slow since several evaluations 
of $H_{p,\sigma}(x)$ may be necessary, which are relatively expensive when the 
dimension $n$ is large. The Wolfe condition scheme also necessitates the 
choice of further parameters. 

\vspace{3.mm}
\subsection{Nonlinear Conjugate Gradient Algorithm}
We now present the conjugate gradient scheme in Algorithm \ref{algo:nonlincg}, 
which can be used for sparsity constrained regularization. In the basic (steepest) descent algorithm, 
we simply take the negative of the gradient as the search direction. For nonlinear conjugate gradient methods, 
several different search direction updates are possible. 
We find that the Polak-Ribi\`{e}re scheme often offers good performance 
\cite{MR0255025,Polyak196994,shewchukCG}. 
In this scheme, we set the initial search direction $s^0$  
to the negative gradient, as in steepest descent, but then do a more 
complicated update involving the gradient at the current and previous steps:
\begin{eqnarray*}
   \beta^{n+1} 
   &=& 
   \max\left\{ \frac{\nabla H_{p,\sigma_n}(x^{n+1})^T 
      \left(\nabla H_{p,\sigma_n}(x^{n+1}) - \nabla H_{p,\sigma_n}(x^n)\right)}
      {\nabla H_{p,\sigma_n}(x^n)^T \nabla H_{p,\sigma_n}(x^n)}, 0 \right\} ,
\\
   s^{n+1} 
   &=& -\nabla H_{p,\sigma_n}(x^{n+1}) + \beta^{n+1} s^n.
\end{eqnarray*}

One extra step we introduce in Algorithm 
\ref{algo:nonlincg} is a thresholding 
which sets small components to zero. That is, at the end of each iteration, we 
retain only a portion of the largest coefficients. This is necessary, 
as otherwise the solution we recover will contain many small noisy components 
and will not be sparse. In our numerical experiments, we found that soft 
thresholding works well when $p=1$ and that hard thresholding works better   
when $p<1$. 
The component-wise soft and hard thresholding functions with parameter 
$\lambda>0$ are given by: 
\begin{equation}
\label{eq:thresholding}
   \left(\mathbb{S}_{\lambda }(x)\right)_k = \left\{
   \begin{array}{ll}
       x_k - \lambda, & \hbox{$x_k > \lambda$} \\
       0, & \hbox{$-\lambda \le x_k \le \lambda$;} \\
       x_k + \lambda, & \hbox{$x_k < -\lambda$ } \\
   \end{array}
   \right. \quad \left(\mathbb{H}_{\lambda }(x)\right)_k = \left\{
   \begin{array}{ll}
       x_k, & \hbox{$|x_k| > \lambda$} \\
       0, & \hbox{$-\lambda \le x_k \le \lambda$}
   \end{array},
   \right. 
\quad\forall\, x\in\real^n.
\end{equation}
For $p=1$, an alternative to thresholding at each iteration at $\lambda$ is to 
use the optimality condition of the $F_1(x)$ functional \cite{ingrid_thresholding1}. 
After each iteration (or after a block of iterations), we can evaluate the vector  
\begin{equation}
\label{eq:v_thresholding}
v^n = A^T (b - A x^n) . 
\end{equation}
We then set the components (indexed by $k$) of the current solution vector 
$x^n$ to zero for indices $k$ for which $|v^n_k| \leq \frac{\lambda}{2}$. 

Note that after each iteration, we also vary the parameter $\sigma$ in the 
approximating function to the absolute value $\phi_\sigma$, 
starting with $\sigma$ relatively far from zero at the first iteration 
and decreasing towards $0$ as we approach the iteration limit. 
The decrease can be controlled by a parameter $\alpha\in(0,1)$ 
so that $\sigma_{n+1} = \alpha \sigma_n$. 
The choice $\alpha = 0.8$ worked well in our experiments. 
Comments on the computational cost relative to the FISTA algorithm are discussed in 
\cite{voronin2014convolution}, where it is shown that the most expensive matrix-vector 
multiplication operations are present in both algorithms. On the other hand, the overhead 
with using CG iterations is significant and each iteration does take more time than 
that of a thresholding method. The algorithm is designed to be run for a small number of 
iterations.

In Algorithm \ref{algo:nonlincg}, we present a nonlinear Polak-Ribi\`{e}re conjugate 
gradient scheme to approximately minimize $\tilde{F}_p$ \cite{MR0255025,Polyak196994,shewchukCG}.
The function $\mathtt{Threshold}(\cdot,\tau)$ in the algorithms which enforces sparsity refers to either one of the two thresholding functions defined in \eqref{eq:thresholding} or to the strategy using the $v^n$ vector in \eqref{eq:v_thresholding}.
The update rule for $\sigma$ can be varied. In particular, it can again be tied to the distance between two successive iterates
(e.g. $\sigma_{n+1} = \min\left(\sigma_0, \alpha*\|x^{n+1} - x^n\|\right)$), although care must be taken not to make 
$\sigma$ too small, which has the effect of introducing a nearly sharp corner. 
Another possibility, given access to both gradient and Hessian, is to use a higher 
order root finding method, such as Newton's method \cite{nocedal_wright_opt} as discussed in 
\cite{voronin2014convolution}.

\begin{algorithm}[!ht]
\SetKwInOut{Input}{Input}
\SetKwInOut{Output}{Output}
\caption{CONV CG Algorithm
\label{algo:nonlincg}}
\Input{An $m\times n$ matrix $A$, 
an initial guess $n \times 1$ vector $x^0$, 
a parameter $\tau < \|A^T b\|_\infty$, 
a parameter $p\in[1,2]$, 
a parameter $\sigma_0>0$, 
a parameter $0 < \alpha < 1$, 
the maximum number of iterations $N$, 
and a routine to evaluate the gradient $\nabla H_{p,\sigma}(x)$ (and possibly 
the Hessian $\nabla^2 H_{p,\sigma}(x)$ depending on choice of line search method).}
\Output{A vector $\bar{x}$, 
close to either the global or local minimum of $\tilde{F}_{p}(x)$, 
depending on choice of $p$.}
\BlankLine
$s^0 = - \nabla H_{p,\sigma_0}(x^0)$ \;
\For{$n=0,1,\ldots$,N}{
use line search to find $\mu>0$\;
$x^{n+1} = \mathtt{Threshold}(x^n + \mu s^n, \tau)$ \;
$\beta^{n+1} = 
\max\left\{ \frac{\nabla H_{p,\sigma_n}
(x^{n+1})^T (\nabla H_{p,\sigma_n}(x^{n+1}) - \nabla H_{p,\sigma_n}(x^n))}
{\nabla H_{p,\sigma_n}(x^n)^T \nabla H_{p,\sigma_n}(x^n)}, 0 \right\}$ \;
$s^{n+1} = -\nabla H_{p,\sigma_n}(x^{n+1}) + \beta^{n+1} s^n$ \;
$\sigma_{n+1} = \alpha \sigma_n$ \; 
}
$\bar{x} = x^{n+1}$\;
\end{algorithm}

\subsection{Application to generalized residual penalty and wavelet representations}

First, we comment on the application of the CONV CG method to \eqref{eq:lp_funct}. In this case, the change of variables 
$y = Ax - b$ gives the constrained minimization problem:
\begin{equation*}
\min_{y,x} \left\{ \|y\|_l^l + \lambda \|x\|_p^p \right\} \quad \mbox{s.t.} \quad y = Ax - b .
\end{equation*}
This can be accomplished via e.g. an alternative variable minimization scheme combined 
with the Lagrange multiplier method, in which case we get the minimization 
problem:
\begin{equation*}
\min_{y,x,s} \left\{  \|y\|_l^l + \lambda \|x\|_p^p + s^T(Ax - b - y)  \right\} 
\end{equation*}
where $s$ is a vector of Lagrange multipliers.
We can use the alternate minimization method to minimize with respect to each variable in a loop. 
Let us now assume that $l \in (1,2)$ and that all $y_i \neq 0$. In this case, minimizing with respect to $y$ by setting the 
gradient to zero:
\begin{equation*}
l \left\{y_i^{l-1}\right\}_{i=1}^m - s = 0 .
\end{equation*}
Next, for minimizing with respect to $x$, we will use the convolution based approximation, 
since some entries of $x$ can indeed be zero. We get, for each component:
\begin{equation*}
\frac{\partial}{\partial x_i} \left[ \displaystyle\sum_{k=1}^n \phi_{\sigma}(x_k)^p + s^T A x \right] = 
p \phi_{\sigma}(x_i)^{p-1} \erf\left(\frac{x_i}{\sqrt{2} \sigma}\right) + \left[A^T s\right]_i = 0,
\end{equation*}
which is a nonlinear system to be solved for each component $i$. (The derivative with respect to $s$ 
simply yields $Ax - b - y = 0$). A more efficient approach is to again assume that all $r_i = (Ax - b)_i \neq 0$ 
and utilize the same result from \cite{scales1988fast}, yielding in place of \eqref{eq:lp_gradient},
the gradient:
\begin{equation*}
\nabla {H_{l,p,\sigma}}(x) = A^T R (Ax - b) + \lambda p \bigl(\vec{v}(x)\bigr), 
\end{equation*} 
with $R = \diag(l |r_i(x)|^{l-2})$ as before. In practice, $R$ would be iteration dependent, as in the IRLS scheme.
The Hessian can also be computed using the results from \eqref{eq:scales_derivative_calc1}, 
taking care of the fact that $R$ depends on $x$. 

Secondly, we comment on the application of our methods in a transformed basis. This is useful when for example, 
we are dealing with images, which are not outright sparse, but can indeed be efficiently represented by the 
use of wavelet transforms. In this case, we would like to apply the sparse penalty to $w = W x$, where $W$ is the 
wavelet transformed matrix. While $w$ may not be sparse, we can remove many of the smaller magnitude coefficients of 
$w$, such that the resulting vector $\tilde{w}$ satisfies, $W^{-1} \tilde{w} \approx x$. 
We can consider the minimization of the functional:
\begin{equation*}
\|A x - b\|_l^l + \lambda \|w\|_p^p,
\end{equation*} 
which with the substitution $x = W^{-1} w$ becomes a function of a single variable $w$. Both algorithms can then 
be applied to this formulation, using the matrix $A W^{-1}$ (in practice, we only need to be able to apply the 
transform and not to form the $W^{-1}$ matrix explicitly). For values of $p$ closer to $1$ this formulation often yields better 
results then the inversion in the default basis for signals which are approximately sparse under $W$ in the above sense.  
For image deconvolution and other formulations, we can also introduce a smoothing term into the regularization. 
For instance, we can minimize:
\begin{equation*}
\|Ax - b\|_2^2 + \lambda_1 \|w\|_p^p + \lambda_2 \|L x\|_2^2,
\end{equation*}
with $L$ a tridiagonal ``Laplacian'' kind of matrix. We can again plug in $x = W^{-1} w$ or 
extend this using the Lagrange multiplier formulation, obtaining the optimization problem:
\begin{equation*}
\min_{x,w,s} P(x,w,s) =  \min_{x,w,s} \left\{ \|Ax - b\|_2^2 + \lambda_1 \|w\|_p^p + \lambda_2 \|L x\|_2^2 + s^T (Wx - w) \right\} .
\end{equation*}
The minimization with respect to $s$ (yielding $Wx = w$) and with respect to $x$ yielding:
\begin{equation*}
2 A^T (A x - b) + 2 \lambda_2 L^T L x + W^T s = 0, 
\end{equation*}
to be solved for $s$ are straightforward. On the other 
hand, the minimization with respect to $w$ (making use of the convolution gradient result), produces again a nonlinear system. 
For instance, taking $p=1$, $\frac{\partial P}{\partial w} = 0$ gives:
\begin{equation*}
(A W^{-1})^T (A W^{-1} w - b) + \lambda_1 \erf\left(\frac{w_j}{\sqrt{2} \sigma}\right)|_{j=1}^n - s = 0
\end{equation*} 
which is a nonlinear system for each $j$. When the dimensionality ($n$) is large, solving such systems at each 
iteration is very expensive. We aim to investigate more efficient formulations for such problems (e.g. deconvolution) 
as part of upcoming work.

\section{Numerical Experiments}
\label{sect:numerics}

First, we aim to carry out a synthetic (2--D) seismic tomography experiment to quantify the usefulness of the functional \eqref{eq:lp_funct} which we consider.
In particular, we will use our IRLS CG algorithm to approximately minimize this functional with different choices of $l$.

We take a linear, tomographic system $Ax = b$, where the model parameters, $x$, are shear-wave velocity perturbations (dlnVs) and the model parametrization consists of $n=1024$ square-pixels (see Fig.\,\ref{FigTomo}(a)).
In the framework of ray theory, data represent onset time-residuals of direct \textit{S} waves, whose ray paths are straight lines from one black dot to another (see Fig.\,\ref{FigTomo}(a)).
The total number of data that we consider is $m=400$ (i.e., $m\ll n$).
Each element $A_{ij}$ of the sensitivity matrix represents the length of the $i$-th ray inside the $j$-th pixel.

For a given input, \textit{true} model, $x^{\text{true}}$, the noisy (outlier-free) data set is computed as: $b \leftarrow Ax^{\text{true}} + n$, for realistic, random noise $n$ (see Fig.\,\ref{FigTomo}(b)).
Here, we consider the \textquoteleft checkerboard' true model displayed in Fig.\,\ref{FigTomo}(c).
The damped least-squares (DLS) model solution, obtained from LSQR inversion of the (outlier-free) data set $b$ is shown in Fig.\,\ref{FigTomo}(d).

Let us consider the previous (outlier-free) data set, $b$, to which we add some \textquoteleft outlier' time-residuals, $n^{\text{outliers}}$, that is: $b^{\text{outliers}} \leftarrow b + n^{\text{outliers}}$ (see Fig.\,\ref{FigTomo}(e)).
The DLS solution, obtained from LSQR inversion of the outlier data set, $b^{\text{outliers}}$, is then displayed in Fig.\,\ref{FigTomo}(f).
Moreover, the corresponding solutions obtained from the same outlier data set, $b^{\text{outliers}}$, using our IRLS CG scheme with $l=1.0$ and $l=1.8$ are shown in Figs.\,\ref{FigTomo}(g) and (h), respectively.

We see that: 1) DLS is fine for outlier-free data, but not so for outlier data; 2) Our IRLS CG algorithm proves to give better results than DLS for outlier data -- when using $l$ values close to unity. 
That is, keeping $l$ close to $2$ gives a big effect in the solution, due to the inclusion of outlier time-residuals.
On the other hand, $l$ closer to $1$ imposes a bigger penalty on the outlier residual terms and produces a solution closer to the (DLS) case of outlier-free data.

As this synthetically constructed example shows, the ability to control the $l$ parameter in the functional allows to mitigate the effects of outlier residuals in the data.
As a remark, automatically removing outliers (e.g., related to mis-picking of seismic phases) in massive data sets may be a very difficult task, so that most data sets in (geo)physics shall come with outliers.

\begin{figure}[hb!]
\centering
 \noindent \includegraphics[width=1.0\textwidth,keepaspectratio=true]{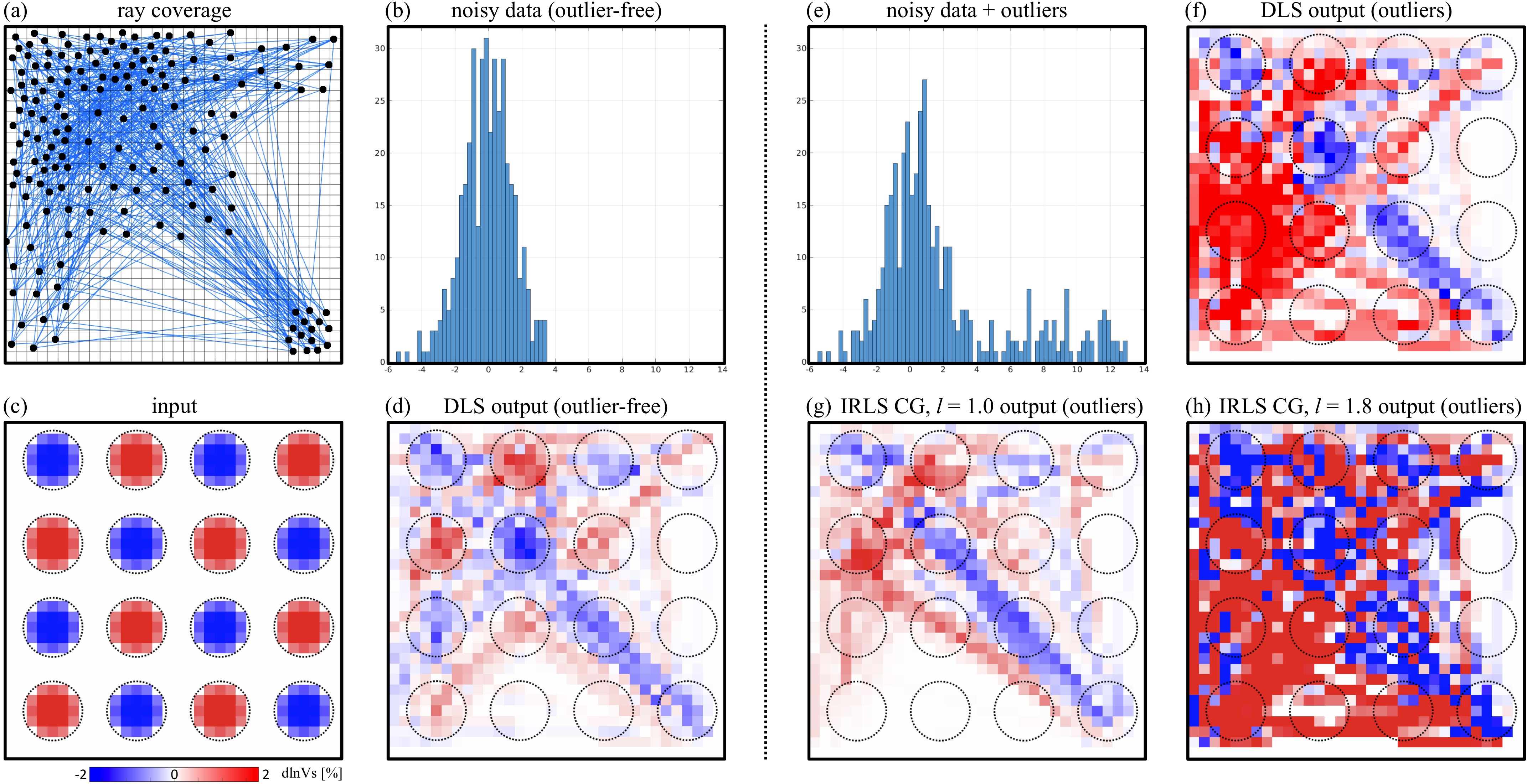}
\caption{
Tomographic experiment --
(a) Ray coverage and model parametrization (square-pixels); (b) Histogram of the outlier-free data set, $b$; (c) Input model, $x^{\text{true}}$; (d) DLS output solution for (univariant) data set $b$; (e) Histogram of the outlier data set, $b^{\text{outlier}}$; (f) IRLS CG output solution, with $l=1.0$, for data set $b^{\text{outlier}}$; (g) IRLS CG output solution, with $l=1.8$, for data set $b^{\text{outlier}}$.
}
 \label{FigTomo}
\end{figure}

In our second experiment, we compare the power of the different methods in decreasing the 
cost functional we are trying to minimize, per iteration. We run each algorithm along an L-curve, decreasing the regularization 
parameter $\lambda$ and reusing the previous solution as the initial guess at the next $\lambda$. 
Typically, we stop the procedure, either when the noise level is approximately matched (e.g. 
when $\|Ax_{\lambda} - b\|_2 \approx \|\text{noise}\|_2$, if the noise norm value is known) or at some 
optimal trade-off point along the curve. This point is often estimated by taking the region of 
maximum curvature between the terms $\log \|Ax_{\lambda} - b\|_l$ and $\|x_{\lambda}\|_p$ (or of $\|w_{\lambda}\|_p$, 
if a transformed basis is used).

In Figure \ref{fig:test2_cost}, we compare the cost reduction power of the different 
algorithms along the L curve. 
To compare with FISTA, we set $l=2, p=1$. 
We use $1000 \times 1000$ test matrices with two different rates of 
logarithmic singular value decay ($logspace(0,-0.5,k)$ and $logspace(0,-2.5,k)$ with $k = \min(m,n) = 1000$ in Octave notation). 
At each value of $\lambda$ we use $3$ iterations. From the figure,
we observe that each iteration of IRLS CG and CONV CG is more powerful than that 
of a thresholding scheme, in the sense that it results in greater functional reduction along the initial 
parts of the L curve. In particular, while each iteration of the CONV CG is more expensive than that of other schemes, 
just a few iterations of the method would be enough to yield a good warm start solution, or build an estimate of the shape of the L curve. As can be observed in 
Figure \ref{fig:test2_cost}, the effect becomes more pronounced as the matrix 
condition worsens. 

In Figure \ref{fig:test2_lcurve}, we present the results of a wavelet based reconstruction experiment 
using a CDF97 wavelet basis and a simple multi-scale model.  
In this experiment, a complete L curve is constructed. In the first row, we illustrate the L curve based reconstruction 
with the CONV CG algorithm using a well conditioned matrix $A$. Using $5$ iterations at each value of 
$\lambda$ (on a logarithmic scale of $50$ values from $\frac{\|(A W^{-1})^T b\|_\infty}{1.2}$ to 
$\frac{\|(A W^{-1})^T b\|_\infty}{10^6}$), we run the CONV CG scheme to minimize 
$\|A x - b\|_2^2 + \lambda \|w\|_1$ (applying the sparsity constraint in the Wavelet basis) and 
reuse the solution as the initial guess at each next iteration. We reconstruct the solution in the original 
basis by applying the inverse transform. We plot the tradeoff curve of 
the quantities $\log \|A x_{\lambda} - b\|_2$ and $\log \|w_{\lambda}\|_1$ and a plot of the curvature of these 
quantities as a function of $\lambda$, estimated using finite differences. As we can see from the error vs parameter plot, the 
reconstruction error drops as we approach the point of maximum curvature along the L curve. 
In the subsequent two rows, we compare the performance of FISTA and CG schemes for such a reconstruction, 
using $5$ iterations at each value of $\lambda$. Now however, we use a worse conditioned 
matrix with logspaced singular values ($logspace(0,-1.5,k)$), for which the problem is more challenging. We see that the 
CONV CG algorithm produces lower percent errors and hence, a closer reconstruction for the same 
number of iterations used. This example illustrates the utility of the CONV CG scheme for 
efficient model reconstruction and for parameter (optimal $\lambda$ value) estimation.

\begin{figure*}[ht!]
\centerline{
\includegraphics[scale=0.40]{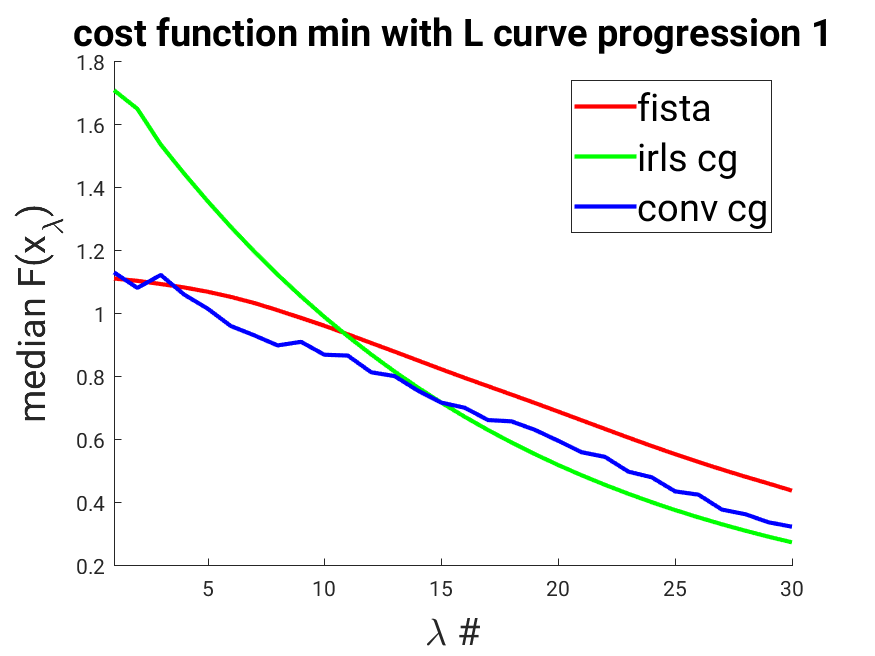}
\includegraphics[scale=0.40]{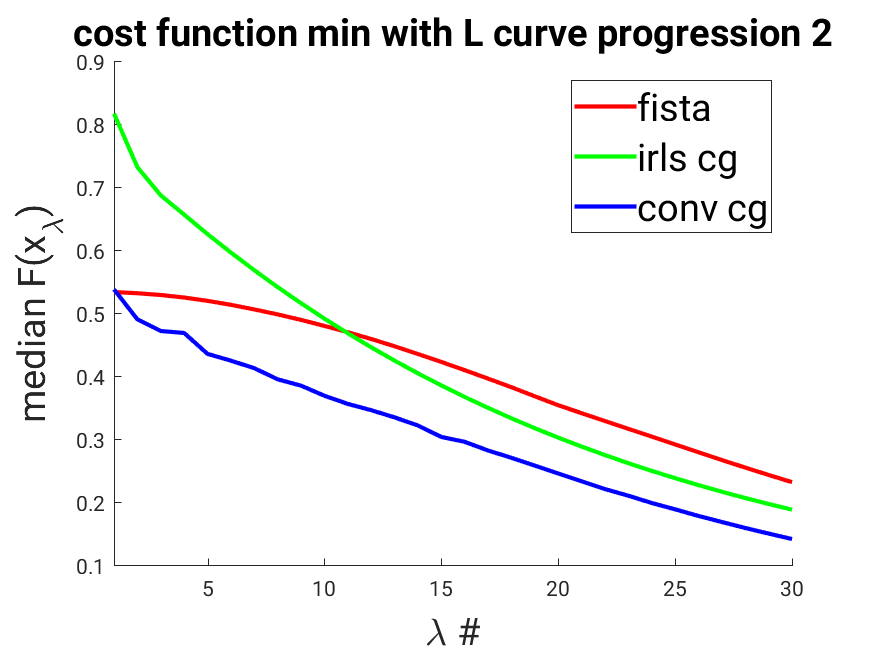}
}
\caption{Comparison of cost function reduction (over 10 trials) along L-curve progression by FISTA, IRLS CG, and CONV CG schemes over better and worse conditioned matrices. \label{fig:test2_cost}}.
\end{figure*}

\begin{figure*}[ht!]
\centerline{
\includegraphics[scale=0.25]{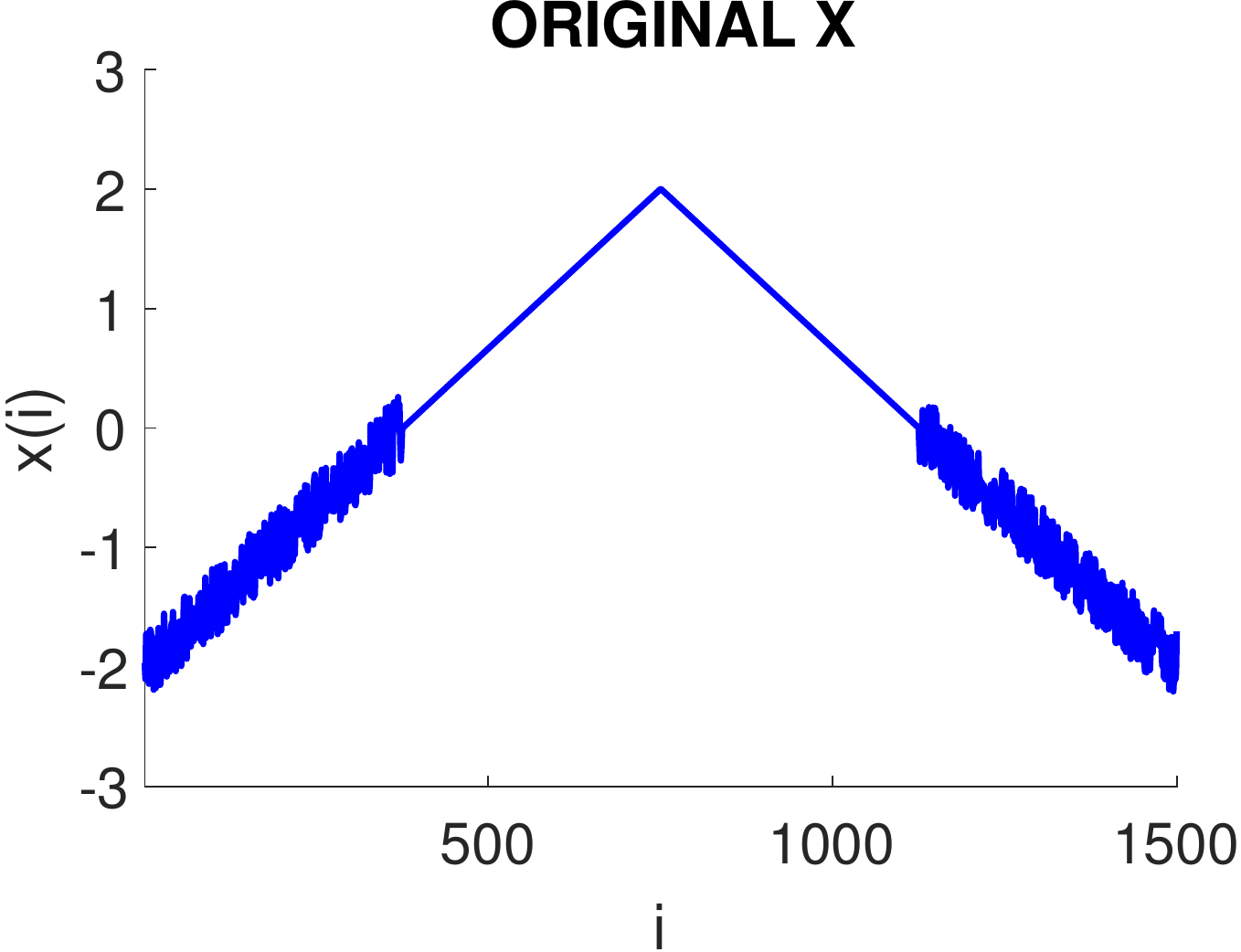}
\includegraphics[scale=0.25]{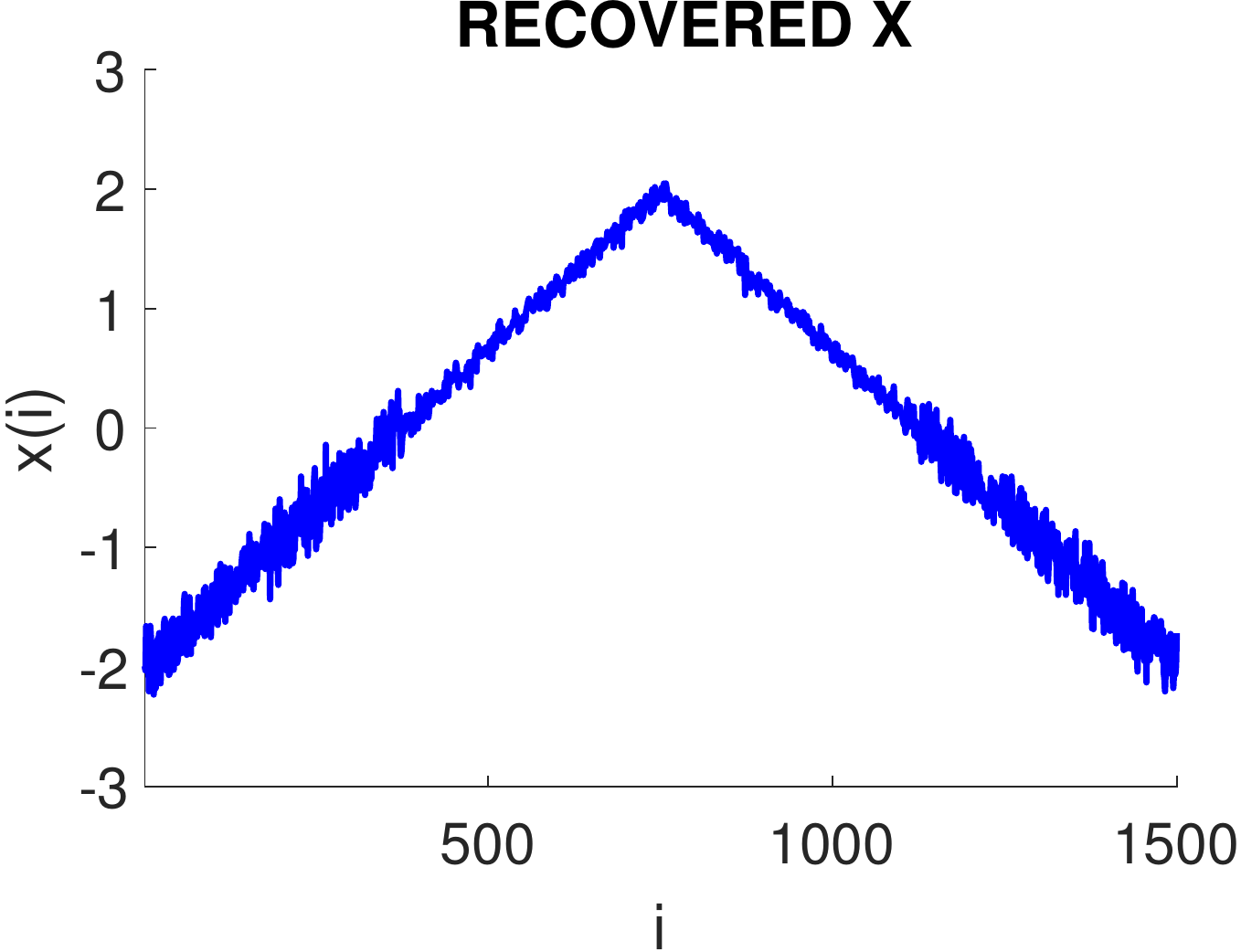}
\includegraphics[scale=0.25]{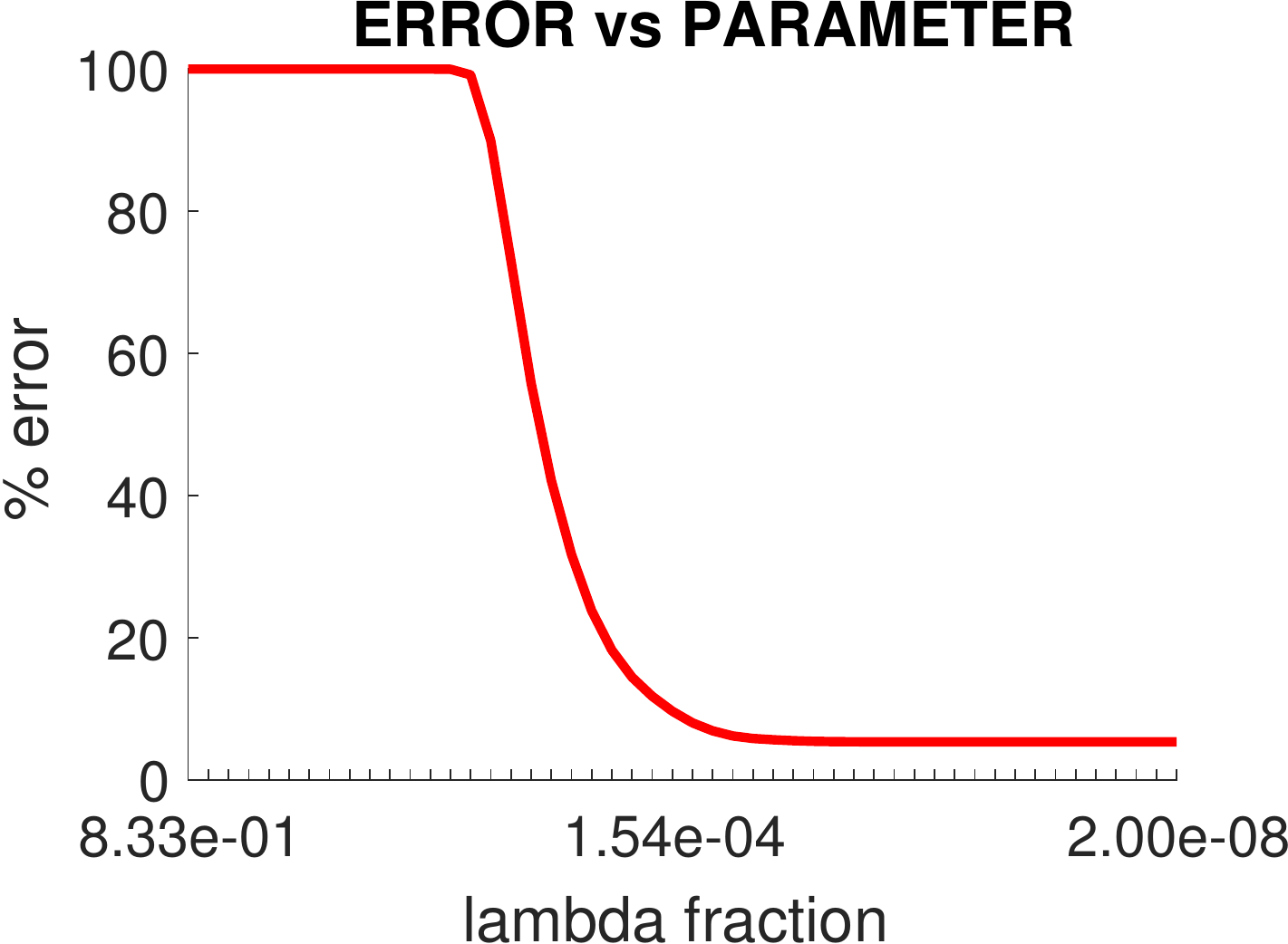}
\includegraphics[scale=0.25]{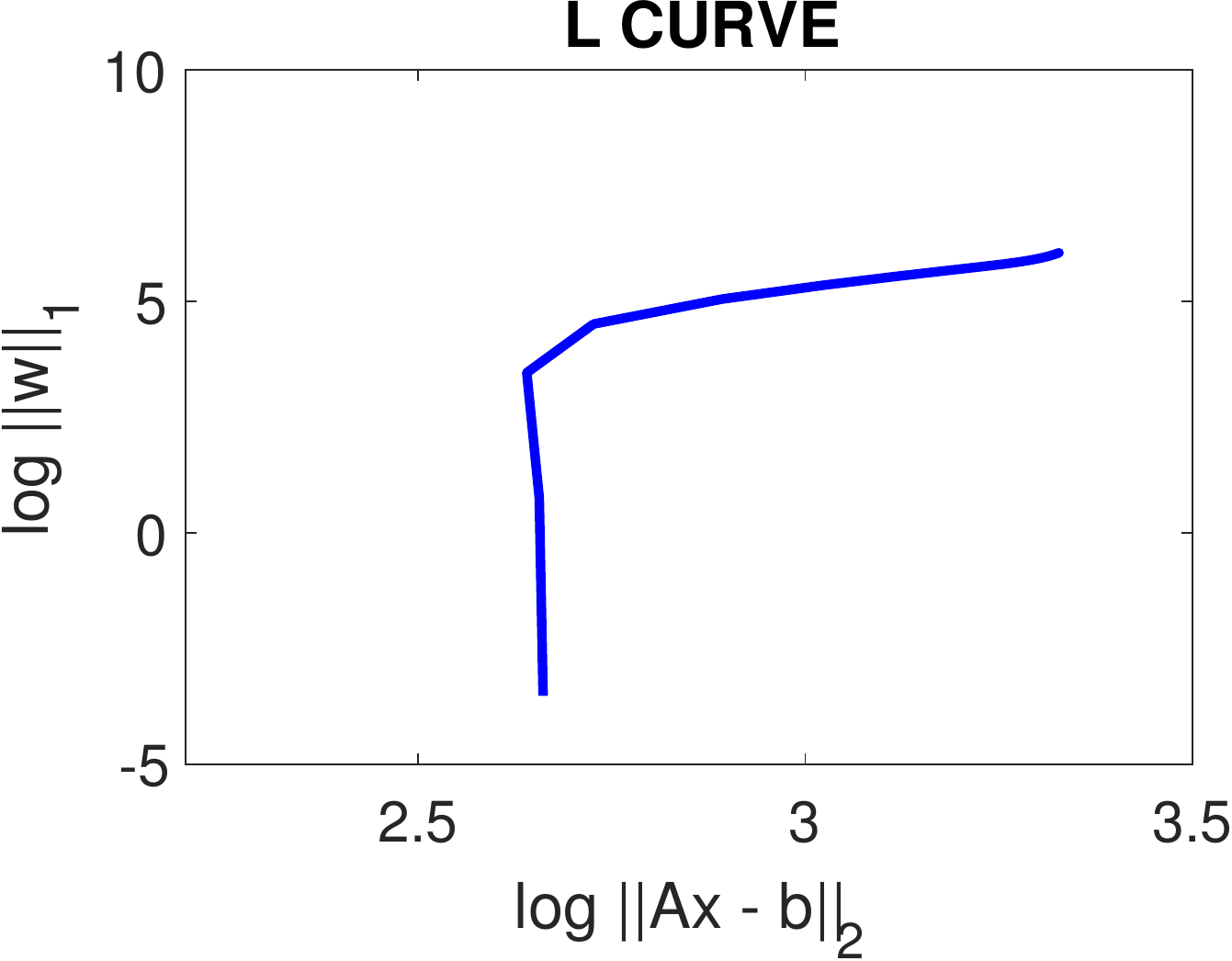}
\includegraphics[scale=0.25]{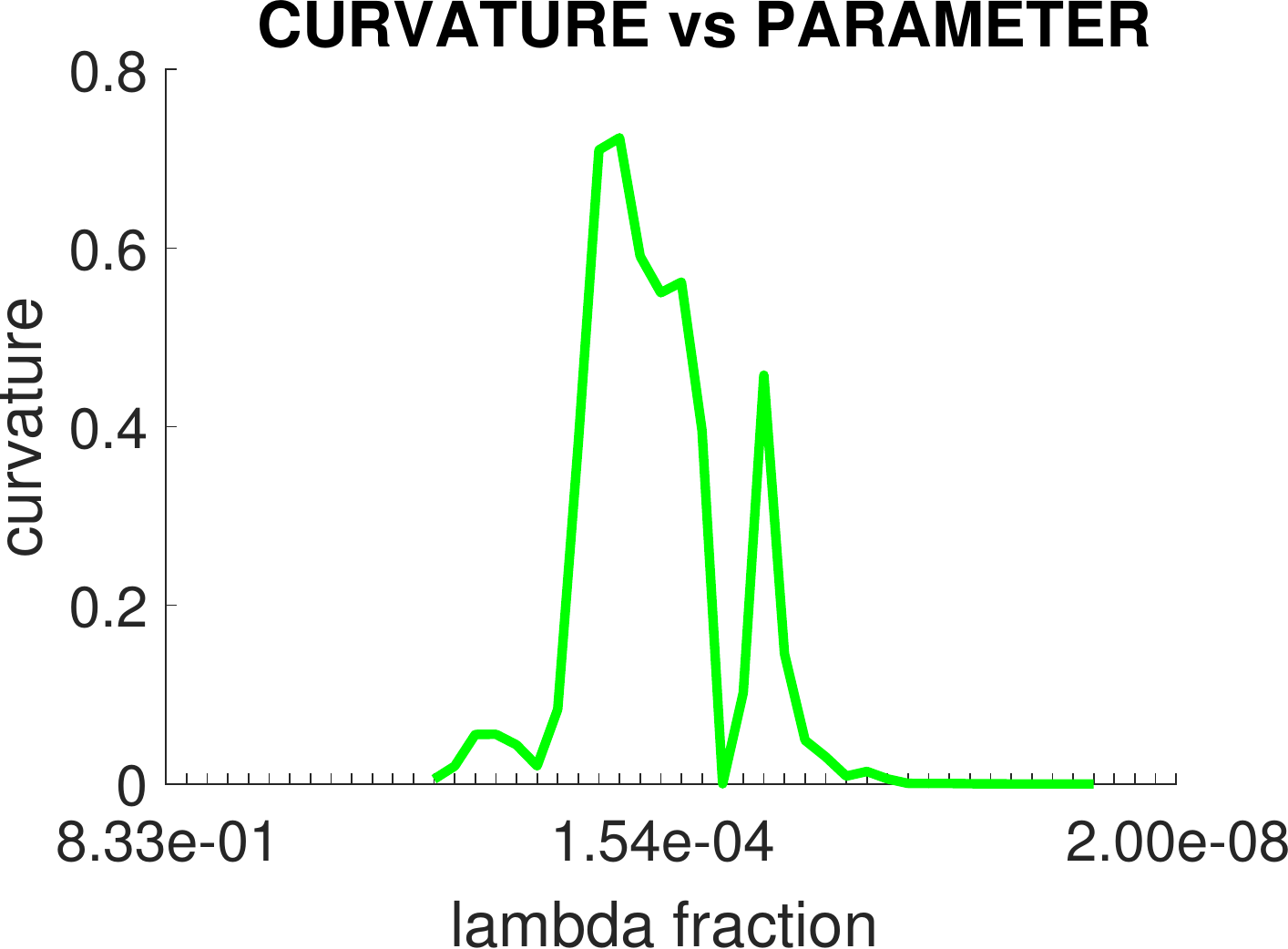}
}
\centerline{
\includegraphics[scale=0.25]{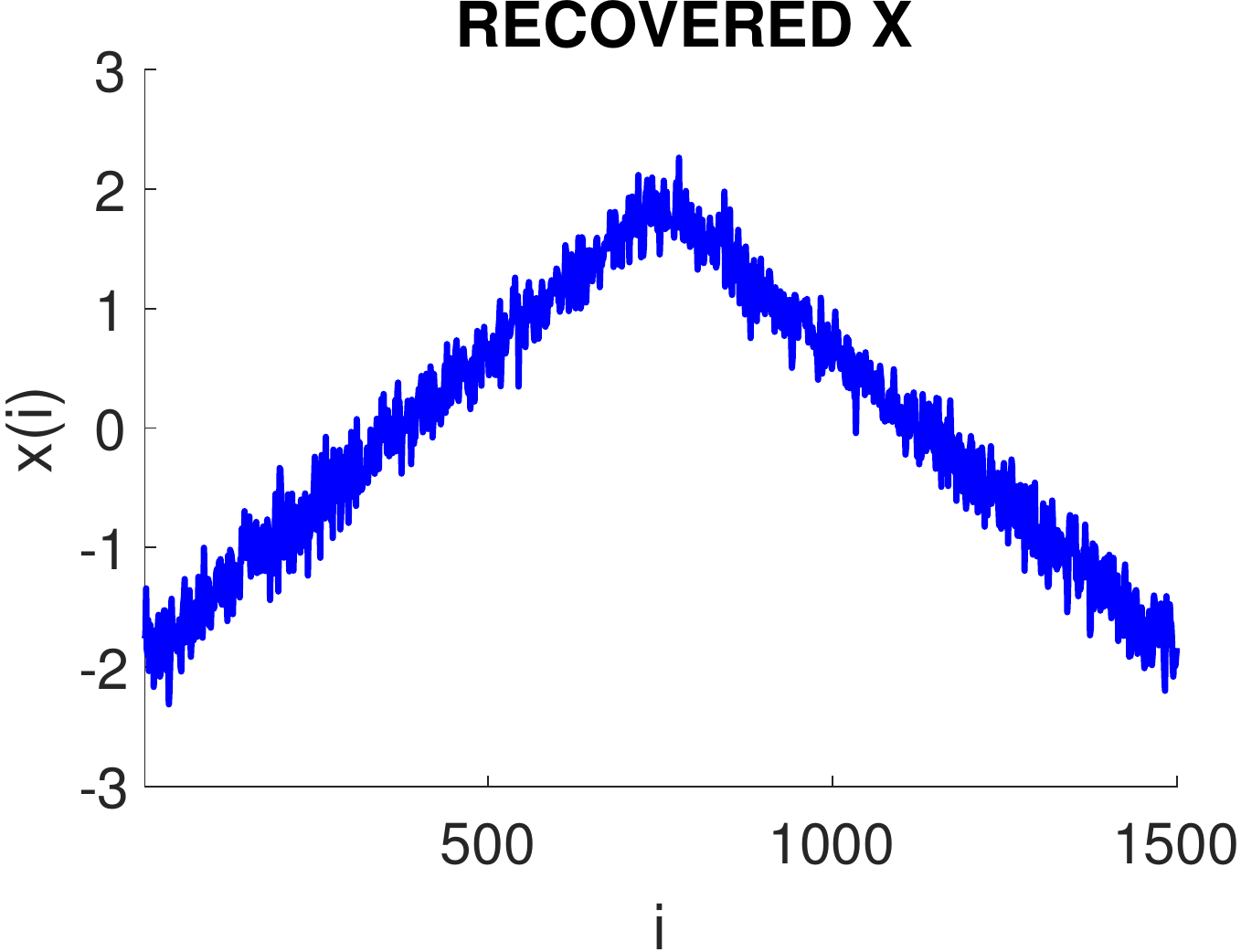}
\includegraphics[scale=0.25]{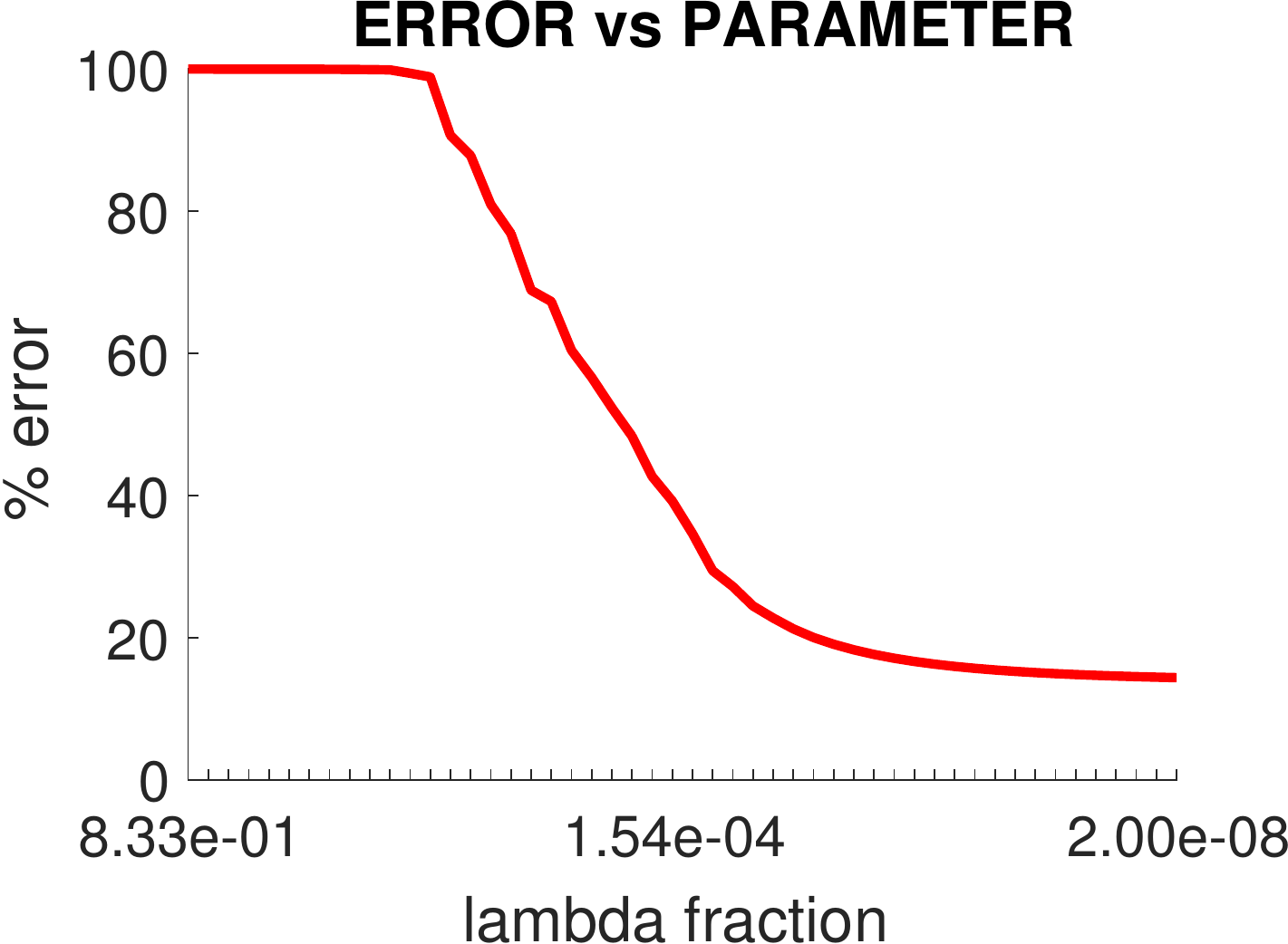}
\includegraphics[scale=0.25]{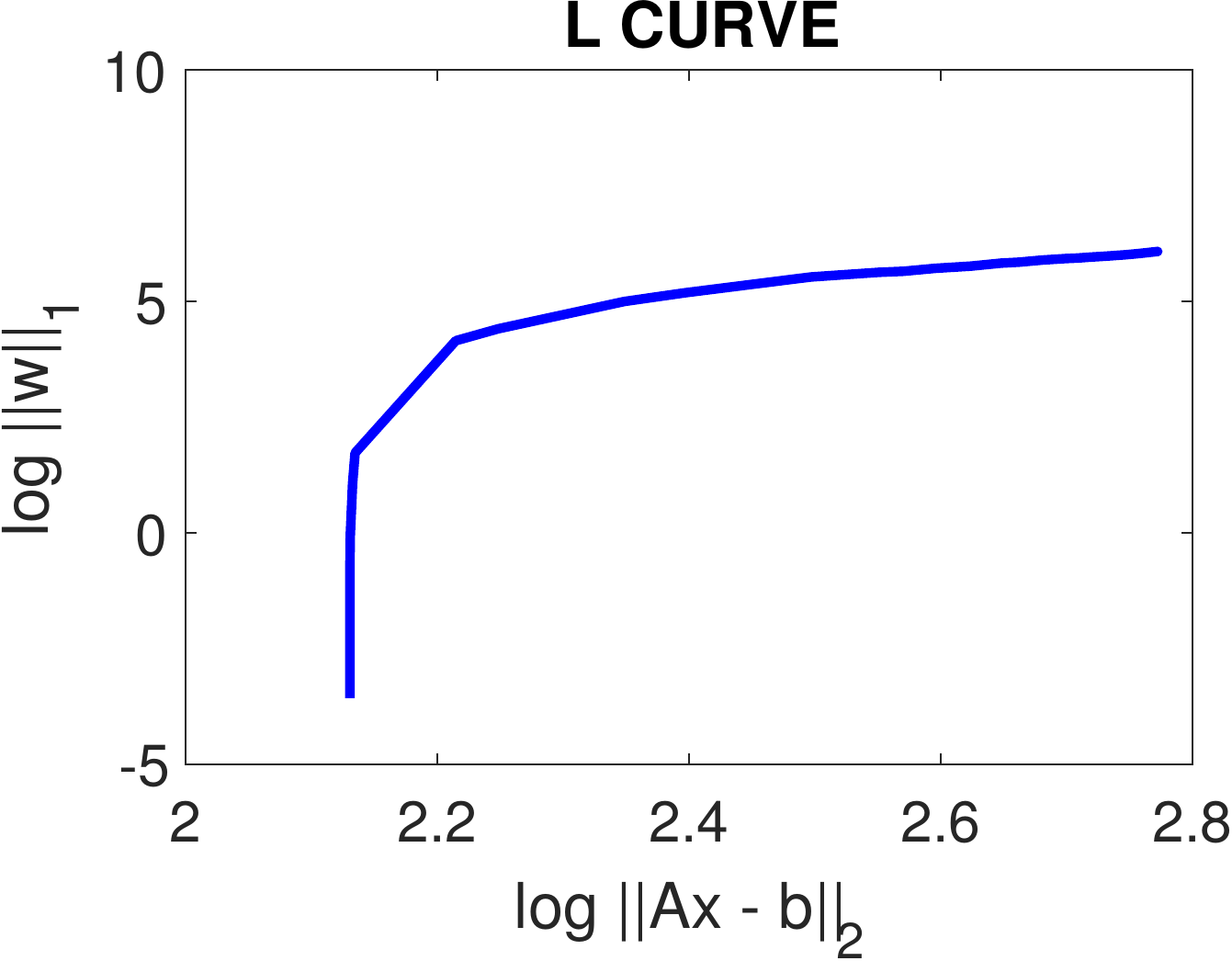}
}
\centerline{
\includegraphics[scale=0.25]{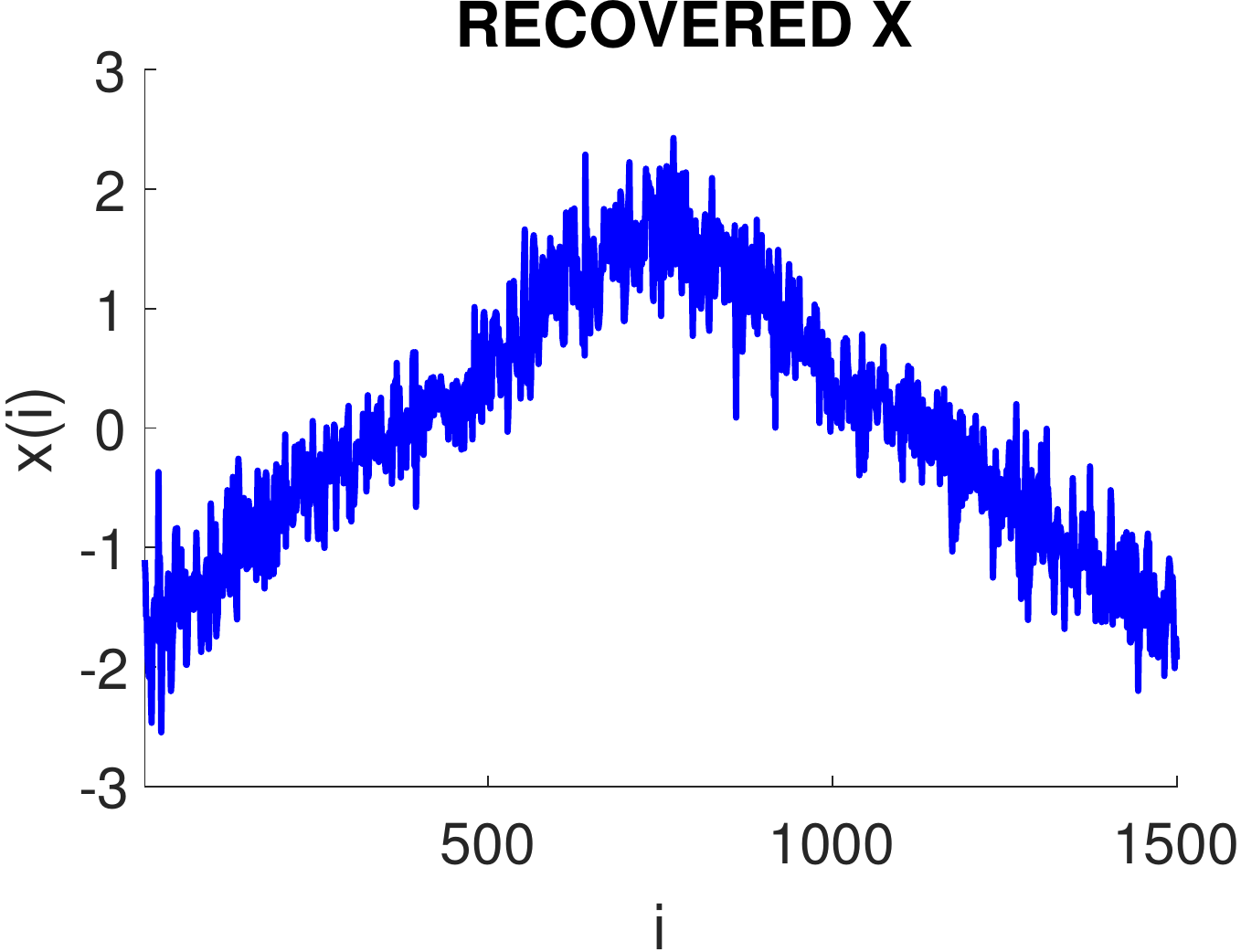}
\includegraphics[scale=0.25]{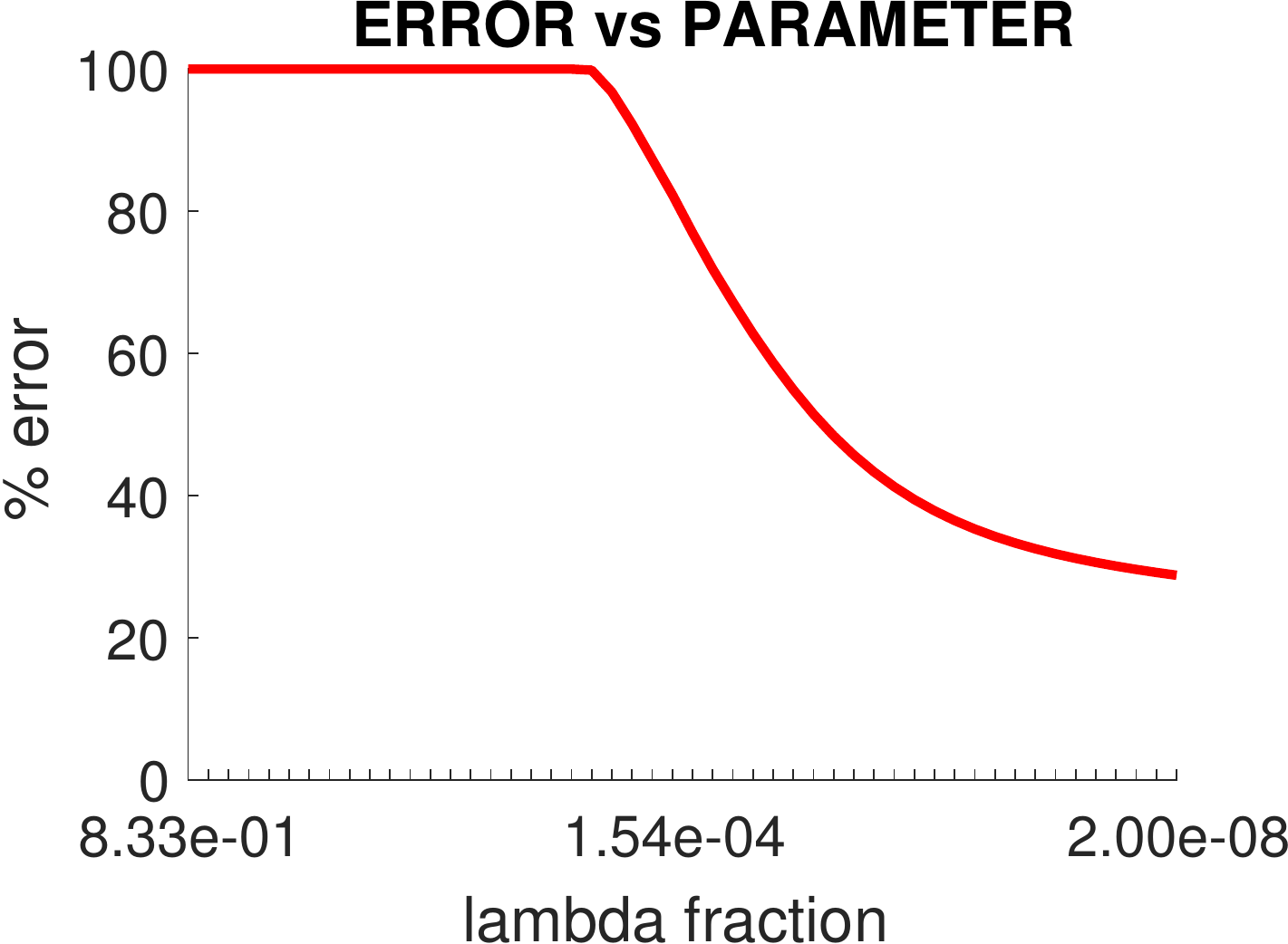}
\includegraphics[scale=0.25]{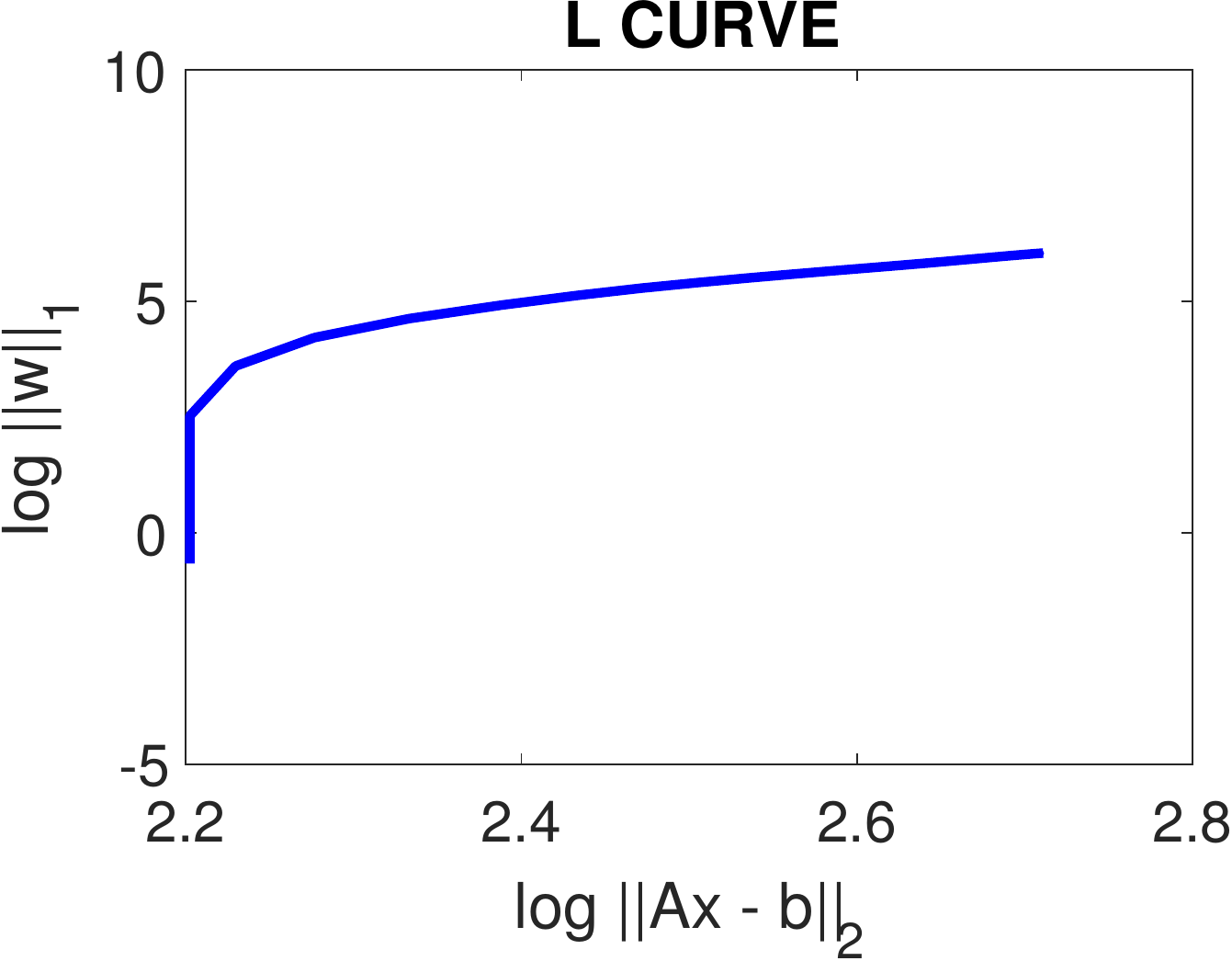}
}
\caption{Comparison of L-curve construction by CONV CG and FISTA schemes. Row 1: L curve illustration 
with a well conditioned matrix. Row 2: CONV CG solution for the worse conditioned matrix. Row 3: FISTA solution for the worse conditoned matrix. \label{fig:test2_lcurve}}
\end{figure*}

\section{Conclusions}

In this paper we present two algorithms based on the CG algorithm, useful in a variety 
of inverse problems. One merit in the methods is in the ability to 
approximately minimize a more general functional, controlled via two parameters 
$l$ and $p$. The functional is useful in a variety of applications, with the $l$ parameter 
controlling the behavior of the residual term (and e.g. the influence of data value 
variations and outliers on the solution) and with $p$ controlling the type of penalty on the components of the solution 
vector (allowing either a minimum norm based penalty or a sparsity promoting penalty term). 
The other merit is in the increased power of the methods per iteration, compared to e.g. thresholding based methods, via the use of 
the heavily researched CG algorithm (and its many possible variants) at each iteration. This allows for the construction of 
approximate regularized solutions as defined by the minimization problem in \eqref{eq:lp_funct}, at fewer iterations.

\bibliographystyle{plain}
\bibliography{ref}

\end{document}